\theoremstyle{plain}
\newtheorem{thm}{Theorem}[section]
\newtheorem{lemma}[thm]{Lemma} 
\newtheorem{prop}[thm]{Proposition}
\theoremstyle{remark}
\newtheorem{remark}[thm]{Remark}
\theoremstyle{definition}
\newtheorem{defi}[thm]{Definition}
\def\today{{\number\day\space
 \ifcase\month\or
  January\or February\or March\or April\or May\or June\or
  July\or August\or September\or October\or November\or December\fi
 \space\number\year}}
\newcommand\Ac{{\mathcal{A}}}
\newcommand\Afr{{\mathfrak A}}
\newcommand\Bc{{\mathcal{B}}}
\newcommand\Bfr{{\mathfrak B}}
\newcommand\Cpx{{\mathbf C}}
\newcommand\Dc{{\mathcal{D}}}
\newcommand\eps{\epsilon}
\newcommand\Fc{{\mathcal{F}}}
\newcommand\freeprod{\operatornamewithlimits{\ast}}
\newcommand\freeprodamalg[2]{{\displaystyle{\textstyle\;({*_{#1}})}_{#2}}}
\newcommand\gammat{{\tilde\gamma}}
\newcommand\HEu{{\EuScript H}}                   
\newcommand\Mcal{{\mathcal{M}}} 
\newcommand\Nats{{\mathbf N}}
\newcommand\oup{^{\mathrm o}}
\newcommand\phih{{\hat\phi}}
\newcommand\psih{{\hat\psi}}
\newcommand\Qc{{\mathcal{Q}}}
\newcommand\QSS{{\operatorname{QSS}}}
\newcommand\Reals{{\mathbf R}}
\newcommand\restrict{{\upharpoonright}}
\newcommand\rhohat{{\hat\rho}}
\newcommand\sigmahat{{\hat\sigma}}
\newcommand\sigmat{{\tilde\sigma}}
\newcommand\sigmathat{{\hat\sigmat}}
\newcommand\Tc{{\mathcal{T}}}
\newcommand\TQSS{{\operatorname{TQSS}}}
\newcommand\Vc{{\mathcal{V}}}
\newcommand\xh{{\hat x}}
\newcommand\yh{{\hat y}}
\newcommand\id{{\operatorname{id}}}
\begin{document}

\title[KMS quantum symmetric states]{KMS quantum symmetric states}

\author[Dykema]{Ken Dykema$^\dag$}
\address{K.\ Dykema, Department of Mathematics, Texas A\&M University, College Station, TX 77843-3368, USA}
\email{kdykema@math.tamu.edu}
\thanks{{}$^\dag$Research supported in part by NSF grant DMS-1202660.}

\author[Mukherjee]{Kunal Mukherjee}
\address{K.\ Mukherjee, Department of Mathematics, Indian Institute of Technology Mad\-ras, 
Chennai  -- 600 036, India}
\email{kunal@iitm.ac.in}

\subjclass[2000]{46L54 (46L53)}
\keywords{quantum symmetric state, KMS state, amalgamated free product}

\date{Septeber 5, 2016}

\begin{abstract}
Let $A$ be a unital C$^*$-algebra and let $\sigma$ be a one-parameter automorphism group of $A$.
We consider $\QSS_\sigma(A)$, the set of all quantum symmetric states on $*_1^\infty A$ that are also KMS states
(for a fixed inverse temperature, for specificity taken to be $-1$) for the free product automorphism group $*_1^\infty\sigma$. We characterize the elements of $\QSS_\sigma(A)$, we show that $\QSS_\sigma(A)$ is a Choquet simplex whenever it is nonempty and we characterize its extreme points.
\end{abstract}

\maketitle

\section{Introduction}

For a unital C$^*$-algebra $A$, the set $\QSS(A)$ of quantum symmetric states was introduced and studied in~\cite{DKW}.
These are the states on the universal, unital free product C$^*$-algebra $*_1^\infty A$ of countably infinitely many copies of $A$,
that are invariant under the quantum permutation groups of S.\ Wang~\cite{W98}.
In~\cite{DKW}, a version of K\"ostler and Speicher's noncommutative de Finetti theorem~\cite{KS09} was proved and used to characterize
the quantum symmetric states in terms of reduced amalgamated free products of C$^*$-algebras,
and the extreme points of $\QSS(A)$ were characterized.
In~\cite{DDM14}, the set $\TQSS(A)$ of tracial quantum symmetric states was studied and was shown to be a Choquet simplex and, moreover,
the Poulsen simplex.
The extreme points of $\TQSS(A)$ were also characterized in~\cite{DDM14}.

In this paper, given a pointwise continouous one-parameter automorphism group $\sigma$ of $A$,
we study the set $\QSS_\sigma(A)$, of quantum symmetric states on $*_1^\infty A$ that are also KMS states for the free product
automorphism group $*_1^\infty\sigma$ of $*_1^\infty A$.
KMS states arose in the study of quantum statistical mechanics and are fundamental in the Tomita-Takasaki theory of
von Neumann algebras (see the introduction of~\cite{BR87} for more, and Sec.~\ref{sec:KMS} below for definitions).
Our main result (Thm.~\ref{thm:simplex}) is that $\QSS_\sigma(A)$ is a Choquet simplex; 
we also give (Thm.~\ref{thm:Psisig})
a characterization of the elements of $\QSS_\sigma(A)$ in terms of amalgamated free products and automorphism groups, and
(also in Thm.~\ref{thm:simplex}) a characterization of the extreme points of $\QSS_\sigma(A)$.

Let us mention two results that we use in the proofs of the above described results.
Thm.~\ref{thm:amalgfpKMS} shows how KMS states arise naturally using the reduced amalgmated free product construction,
and Thm.~\ref{thm:center} characterizes the center of an amalgamated free product of infinitely many
copies of a given pair consisting of a von Neumann algebra and a faithful conditional expectation.
It is a generalization of Thm. 3.3 of~\cite{DDM14}, which was in the tracial setting.

Here is a brief summary of the rest of the paper:
Sec.~\ref{sec:KMS} gives background on and recalls some results about KMS states.
Sec.~\ref{sec:KMSamalg} proves the result about KMS states of amalgamated free product C$^*$-algebras.
Sec.~\ref{sec:QSKMSS} proves the characterization of KMS quantum symmetric states.
Sec.~\ref{sec:center} describes the center of the amalgamated free product of infinitely many copies
of a von Neumann algebra.
Sec.~\ref{sec:simplex} proves that $\QSS_\sigma(A)$ is a Choquet simplex, whenever it is nonempty.

\section{KMS states}
\label{sec:KMS}

In this section, we recall the definition of KMS states and some related facts.
All of these are well known, but we provide short proofs of some of them for convenience.

Consider a C$^*$-algebra $A$ and a one-parameter, pointwise-norm continous group $(\sigma_t)_{t\in\Reals}$ of automorphisms
of $A$.
The continuity condition is that for each $x\in A$, the function $t\mapsto\sigma_t(x)$ is norm continuous.
An element $a\in A$ is said to be {\em entire analytic} if there is a function $\Cpx\ni z\mapsto \sigma_z(a)\in A$
that is differentiable with respect to the norm topology in $A$ and such that, as suggested by the notation, the restriction
to the real axis of this function is given by the automorphism group acting on $a$.
(See, for example, Prop.~2.5.21 of~\cite{BR87}.)
The facts in the following lemma are well known and are easy to verify,
using that every $A$-valued analytic function
is determined by its restriction to the real line.
\begin{lemma}\label{lem:sigmafacts}
Let $a,b\in A$, be entire analytic elements and let $z,w\in\Cpx$.
Then
\begin{enumerate}[(i)]
\item $a+b$ is an entire analytic element  and $\sigma_z(a+b)=\sigma_z(a)+\sigma_z(b)$;
\item $ab$ is an entire analytic element and $\sigma_z(ab)=\sigma_z(a)\sigma_z(b)$;
\item $a^*$ is an entire analytic element and $\sigma_z(a^*)=\sigma_{\overline z}(a)^*$;
\item $\sigma_w(a)$ is an entire analytic element and $\sigma_z\big(\sigma_w(a)\big)=\sigma_{z+w}(a)$.
\end{enumerate}
\end{lemma}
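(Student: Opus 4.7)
The plan for all four items is the same: for each claim, exhibit an explicit $A$-valued entire function of $z$ that agrees on the real axis with $t\mapsto\sigma_t$ of the element in question, and then invoke the identity principle for $A$-valued analytic functions to conclude both that the element is entire analytic and that the claimed formula for $\sigma_z$ holds. The identity principle used is that an $A$-valued function which is analytic on a connected open set and vanishes on a set with an accumulation point in that set must vanish identically; combined with the Hahn--Banach theorem this reduces to the scalar case.

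For (i), the candidate is $F(z)=\sigma_z(a)+\sigma_z(b)$, which is entire as a sum of entire $A$-valued functions, and agrees with $\sigma_t(a+b)$ on the real line by linearity of $\sigma_t$. For (ii), the candidate is $F(z)=\sigma_z(a)\sigma_z(b)$; entireness follows because multiplication in $A$ is jointly continuous and bilinear, so the Leibniz rule shows that the product of two entire analytic $A$-valued functions is entire analytic. On the real line it equals $\sigma_t(a)\sigma_t(b)=\sigma_t(ab)$ since each $\sigma_t$ is a homomorphism. For (iv) the candidate is $F(z)=\sigma_{z+w}(a)$, which is entire by translation of the entire extension of $a$. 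To see that $F(t)=\sigma_t(\sigma_w(a))$ for $t\in\Reals$, fix $t$ and consider the entire function $w\mapsto\sigma_t(\sigma_w(a))-\sigma_{t+w}(a)$: both terms are entire in $w$ (the first because $\sigma_t$ is a bounded linear operator on $A$), and on the real axis the group law gives $\sigma_t\sigma_s=\sigma_{t+s}$, so the difference vanishes on $\Reals$ and hence identically.

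The main subtlety is in (iii), since the map $z\mapsto\overline z$ is anti-holomorphic and the $*$-operation is anti-linear, so one must check that the composition is holomorphic rather than anti-holomorphic. Concretely, define $F(z)=\sigma_{\overline z}(a)^*$; then $z\mapsto\sigma_{\overline z}(a)$ is anti-holomorphic and $x\mapsto x^*$ is a continuous conjugate-linear map, so their composite $F$ is holomorphic (one can verify this directly by expanding the power series of $\sigma_w(a)$ around a real point, replacing $w$ by $\overline z$, and then taking adjoints term by term). On the real axis $\overline t=t$, so $F(t)=\sigma_t(a)^*=\sigma_t(a^*)$ because $\sigma_t$ is a $*$-automorphism. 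The identity principle then gives that $a^*$ is entire analytic with $\sigma_z(a^*)=\sigma_{\overline z}(a)^*$.

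I expect no serious obstacle; the only place demanding a moment's thought is (iii), where one has to be careful to combine the conjugation $z\mapsto\overline z$ with the conjugate-linearity of $*$ in the right order to land on a holomorphic (not anti-holomorphic) function. Everything else is a direct application of the entire-extension viewpoint together with identity-principle uniqueness.
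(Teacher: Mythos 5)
Your proposal is correct and follows exactly the route the paper indicates: the paper gives no detailed proof, stating only that the facts are easy to verify using that every $A$-valued analytic function is determined by its restriction to the real line, which is precisely the identity-principle argument you carry out for each item (including the correct handling of the anti-holomorphic composition in (iii)).
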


Let $\Afr_\sigma$ be the set of all entire analytic elements of $A$.
From the above, it follows that $\Afr_\sigma$ is a $*$-subalgebra of $A$.
By Prop.~2.5.22 of~\cite{BR87}, this subalgebra is dense in $A$.

\begin{lemma}\label{lem:Eanalytic}
Let $B$ be a C$^*$-subalgebra of a C$^*$-algebra $A$ and suppose $E:A\to B$ is a conditional expectation.
Suppose $\sigma=(\sigma_t)_{t\in\Reals}$ is a one-parameter automorphism group of $A$ such that $E\circ\sigma_t=\sigma_t\circ E$
for all $t\in\Reals$.
Suppose $a\in A$ is entire analytic.
Then $E(a)$ is entire analytic, and $\sigma_z(E(a))=E(\sigma_z(a))$ for all $z\in\Cpx$.
\end{lemma}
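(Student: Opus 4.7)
The plan is to exhibit an explicit entire-analytic extension of the function $t \mapsto \sigma_t(E(a))$ by composing the given extension of $t \mapsto \sigma_t(a)$ with $E$, and then invoke uniqueness of analytic continuation.

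First I would recall that, by hypothesis, there is a norm-differentiable function $F : \Cpx \to A$ with $F(t) = \sigma_t(a)$ for all $t \in \Reals$; this is what it means for $a$ to be entire analytic. Since $E : A \to B$ is a conditional expectation, it is bounded (in fact contractive) and linear, hence continuous. Composing, I get a function $G := E \circ F : \Cpx \to B$. The main calculation is that $G$ is itself norm-differentiable: for any $z_0 \in \Cpx$, the difference quotient $\frac{G(z_0+h) - G(z_0)}{h}$ equals $E\bigl(\frac{F(z_0+h) - F(z_0)}{h}\bigr)$ by linearity, and the difference quotient for $F$ converges in norm to $F'(z_0) \in A$; continuity of $E$ then gives norm-convergence to $E(F'(z_0)) \in B$. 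Thus $G$ is entire analytic as a $B$-valued function.

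Next I would check that $G$ restricts on the real axis to $t \mapsto \sigma_t(E(a))$: indeed, for $t \in \Reals$,
\begin{equation*}
G(t) = E(F(t)) = E(\sigma_t(a)) = \sigma_t(E(a)),
\end{equation*}
using the commutation hypothesis $E \circ \sigma_t = \sigma_t \circ E$. This simultaneously establishes that $E(a)$ is entire analytic and identifies the analytic extension of $t \mapsto \sigma_t(E(a))$ as the function $G$. Consequently $\sigma_z(E(a)) = G(z) = E(\sigma_z(a))$ for every $z \in \Cpx$, which is the desired identity.

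I do not expect a genuine obstacle here: the only nontrivial ingredients are that $E$ is norm-continuous (automatic for conditional expectations) and that a norm-analytic $A$-valued function composed with a bounded linear map into $B$ is norm-analytic $B$-valued, which is immediate from the definition via difference quotients. The commutation hypothesis $E \circ \sigma_t = \sigma_t \circ E$ on $\Reals$ is used exactly once, to identify $G$ on the real line; uniqueness of analytic continuation is not even needed, since we have directly produced the extension.
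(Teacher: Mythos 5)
Your proposal is correct and is essentially the paper's own argument: the paper likewise observes that $E$, having norm $1$, carries the entire-analytic extension $z\mapsto\sigma_z(a)$ to a $B$-valued holomorphic function $z\mapsto E(\sigma_z(a))$, which agrees with $\sigma_t(E(a))$ on the real axis by the commutation hypothesis. Your version merely spells out the difference-quotient verification that the paper leaves implicit.
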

\begin{proof}
Since $E$ has norm $1$, the map $z\mapsto E(\sigma_z(a))$ is $B$-valued holomorphic.
When $z=t\in\Reals$, then we have
$E(\sigma_t(a))=\sigma_t(E(a))$.
\end{proof}

A proof of the next result is contained in the proof of Prop. 5.3.7 of~\cite{BR97}.

\begin{prop}\label{prop:KMS}
Let $\beta\in\Reals\backslash\{0\}$ and let $\phi$ be a state of $A$.
Then the following are equivalent:
\begin{enumerate}[(a)]
\item There is a $\sigma$-invariant dense $*$-subalgebra $\Bfr$ of $\Afr_\sigma$, such that for all $a\in A$ and all $b\in\Bfr$,
$\phi(a\sigma_{\beta i}(b))=\phi(ba)$.
\item For all $a\in A$ and all $b\in\Afr_{\sigma}$,
$\phi(a\sigma_{\beta i}(b))=\phi(ba)$.
\item\label{it:KMSF}
If $\beta>0$, let 
\[
D_\beta=\{z\in\Cpx\mid 0<\Im z<\beta\}
\]
and if $\beta<0$, let
\[
D_\beta=\{z\in\Cpx\mid\beta<\Im z<0\}.
\]
Let $\overline{D_\beta}$ denote the closure of $D_\beta$.
Then for all $a,b\in A$, there is a bounded and continuous function $f_{a,b}:\overline{D_\beta}\to\Cpx$ that is holomorphic
on $D_\beta$ and such that for all $t\in\Reals$,
\[
f_{a,b}(t)=\phi\big(a\sigma_t(b)\big),\qquad f_{a,b}(t+i\beta)=\phi\big(\sigma_t(b)a\big).
\]
\end{enumerate}
\end{prop}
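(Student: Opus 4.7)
The plan is to note that (b)$\Rightarrow$(a) is immediate---take $\Bfr=\Afr_\sigma$, which by Lemma~\ref{lem:sigmafacts} is a $\sigma$-invariant $*$-subalgebra, densely contained in itself---and then to establish the remaining implications in the order (b)$\Rightarrow$(c), (c)$\Rightarrow$(b), (a)$\Rightarrow$(b). The unifying tool is Gaussian mollification: for $b\in A$ and $n\in\Nats$, set
\[
b^{(n)}:=\sqrt{n/\pi}\int_{\Reals}e^{-nt^2}\sigma_t(b)\,dt.
\]
Then $b^{(n)}\in\Afr_\sigma$ with $\sigma_z(b^{(n)})=\sqrt{n/\pi}\int_\Reals e^{-n(t-z)^2}\sigma_t(b)\,dt$, the strip bound $\|\sigma_z(b^{(n)})\|\le e^{n(\Im z)^2}\|b\|$, $b^{(n)}\to b$ in norm, and the translation identity $\sigma_s(b^{(n)})=(\sigma_s(b))^{(n)}$ for $s\in\Reals$.

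For (b)$\Rightarrow$(c), given $a,b\in A$, set $f_n(z):=\phi(a\sigma_z(b^{(n)}))$. The strip bound makes each $f_n$ bounded and holomorphic on $\overline{D_\beta}$. Since $\sigma_t(b^{(n)})=(\sigma_t(b))^{(n)}\in\Afr_\sigma$, condition (b) applied to this element together with Lemma~\ref{lem:sigmafacts}(iv) yields
\[
f_n(t+i\beta)=\phi\bigl(a\sigma_{\beta i}(\sigma_t(b^{(n)}))\bigr)=\phi\bigl(\sigma_t(b^{(n)})a\bigr).
\]
As $n\to\infty$, the boundary values on $\Reals$ and on $\Reals+i\beta$ converge uniformly in $t$ (since $b^{(n)}\to b$ in norm and $\sigma_t$ is isometric for $t\in\Reals$) to $\phi(a\sigma_t(b))$ and $\phi(\sigma_t(b)a)$ respectively. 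The three-lines (Hadamard) lemma applied to each difference $f_n-f_m$---individually bounded on $D_\beta$ and continuous on $\overline{D_\beta}$---then gives Cauchy convergence uniformly on $\overline{D_\beta}$, and the limit $f_{a,b}$ has the required properties.

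For (c)$\Rightarrow$(b), fix $b\in\Afr_\sigma$ and $a\in A$. The entire function $g(z):=\phi(a\sigma_z(b))$ agrees on $\Reals$ with the bounded holomorphic $f_{a,b}$ of (c); the difference $g-f_{a,b}$ is continuous on $\overline{D_\beta}$, holomorphic on $D_\beta$, and vanishes on the boundary line $\Reals$. Standard boundary uniqueness (Schwarz reflection with zero boundary values, or equivalently a Morera/Painlev\'e removable-set argument applied to the reflection of $g-f_{a,b}$ across $\Reals$) forces it to vanish identically on $\overline{D_\beta}$. Evaluation at $z=i\beta$ gives $\phi(a\sigma_{\beta i}(b))=g(i\beta)=f_{a,b}(i\beta)=\phi(ba)$. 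Finally, (a)$\Rightarrow$(b) is handled by first extending (a) to mollifications of $\Bfr$-elements via Fubini combined with a Cauchy contour shift---using (a) pointwise on $\sigma_t(c)\in\Bfr$ and choosing $n$ large enough that the Gaussian weight dominates the growth of the integrand---then applying the (b)$\Rightarrow$(c) argument with these mollifications in place of arbitrary $\Afr_\sigma$-elements, and concluding by (c)$\Rightarrow$(b).

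The main obstacle will be the limit passage in (b)$\Rightarrow$(c): the a priori strip bounds on the $f_n$ diverge as $e^{n\beta^2}\|b\|$ with $n$, so equicontinuity on $\overline{D_\beta}$ is unavailable directly. The three-lines lemma must be applied to the differences $f_n-f_m$ (each individually bounded on $D_\beta$) rather than to the $f_n$ themselves; this is the precise device converting uniform convergence of boundary values into uniform convergence on the closed strip.
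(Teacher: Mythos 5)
Your argument is correct and takes essentially the same approach as the proof the paper relies on: the paper does not prove Prop.~\ref{prop:KMS} itself but refers to the proof of Prop.~5.3.7 of \cite{BR97}, which uses precisely your devices --- Gaussian regularization $b^{(n)}$, the Phragm\'en--Lindel\"of (three-lines) bound applied to the differences $f_n-f_m$ to upgrade uniform convergence of the boundary values to uniform convergence on the closed strip, and boundary uniqueness of holomorphic functions for (c)$\Rightarrow$(b). No gaps; the one point you flag (that the a priori bounds $e^{n\beta^2}\|b\|$ diverge, so the three-lines lemma must be applied to differences) is exactly the subtlety of the standard argument.
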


\begin{defi}\label{def:KMS}
A state $\phi$ of $A$ is said to 
satisfy the {\em Kubo-Martin-Schwinger} or {\em KMS condition}
with respect to $\sigma$ at value $\beta\in\Reals$
if it satisfies the above equivalent conditions.
\end{defi}
Note that, if a state satisfies the KMS-condition at value $\beta\ne0$ for the automorphism group $\sigma$, 
then it satisfies the KMS-condition at value $\beta=-1$ for an aumorphism group obtained from $\sigma$ by reparameterization.
Thus, we confine ourselves to the case $\beta=-1$, and we call such states
$\sigma$-KMS states, or, if $\sigma$ is clear
from context, simply {\em KMS states}.
(See Def.~5.3.1 of~\cite{BR97}.)

\begin{remark}\label{rem:invariant}
By Prop.~5.3.3 of \cite{BR97}, every $\sigma$-KMS state $\phi$ is $\sigma$-invariant, namely, $\phi(\sigma_t(a))=\phi(a)$
for every $a\in A$ and $t\in\Reals$.
It then follows, when $a$ is an entire analytic element, that $\phi(\sigma_z(a))=\phi(a)$ for all $z\in\Cpx$.
\end{remark}

\begin{remark}\label{rem:homomKMS}
From condition~\eqref{it:KMSF} of Prop.~\ref{prop:KMS}, we see that if $B$ is a unital C$^*$-algebra with one-parameter
automorphism group $\gamma=(\gamma_t)_{t\in\Reals}$ and if $\theta:B\to A$ is a $*$-homomorphism such that
$\sigma_t\circ\theta=\theta\circ\gamma_t$ for all $t\in\Reals$, then $\phi\circ\theta$ is a $\gamma$--KMS state of $B$.
\end{remark}

The following easy result will be useful in proving that certain states are KMS states.
\begin{lemma}\label{lem:subalg}
Let $\sigma$ be a one-parameter automorphism group of a
C$^*$-algebra $A$. 
Let 
\[
\Bfr=\{b\in\Afr_{\sigma}\mid\forall a\in A,\,\phi(a\sigma_{-i}(b))=\phi(ba)\}.
\]
Then $\Bfr$ is a subalgebra of $\Afr_{\sigma}$.
\end{lemma}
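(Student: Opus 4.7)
The plan is to verify closure under the three algebra operations: addition, scalar multiplication, and multiplication. Throughout, I use Lemma~\ref{lem:sigmafacts} to ensure that $\Afr_\sigma$ is itself closed under these operations and that $\sigma_{-i}$ behaves linearly and multiplicatively on entire analytic elements.

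Closure under addition and scalar multiplication is immediate and nearly mechanical. For $b_1,b_2\in\Bfr$ and $\lambda,\mu\in\Cpx$, Lemma~\ref{lem:sigmafacts}(i) gives $\lambda b_1+\mu b_2\in\Afr_\sigma$ with $\sigma_{-i}(\lambda b_1+\mu b_2)=\lambda\sigma_{-i}(b_1)+\mu\sigma_{-i}(b_2)$. For any $a\in A$, linearity of $\phi$ together with the defining identities for $b_1$ and $b_2$ then yields $\phi(a\sigma_{-i}(\lambda b_1+\mu b_2))=\lambda\phi(b_1 a)+\mu\phi(b_2 a)=\phi((\lambda b_1+\mu b_2)a)$.

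The substantive step is closure under multiplication, and this is where the defining condition gets used twice. Let $b_1,b_2\in\Bfr$; by Lemma~\ref{lem:sigmafacts}(ii), $b_1b_2\in\Afr_\sigma$ and $\sigma_{-i}(b_1b_2)=\sigma_{-i}(b_1)\sigma_{-i}(b_2)$. For $a\in A$, write
\[
\phi\bigl(a\,\sigma_{-i}(b_1b_2)\bigr)=\phi\bigl((a\sigma_{-i}(b_1))\,\sigma_{-i}(b_2)\bigr).
\]
Setting $a':=a\sigma_{-i}(b_1)\in A$ and invoking the defining property of $b_2$ turns this into $\phi(b_2 a')=\phi(b_2 a\,\sigma_{-i}(b_1))$. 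Now set $a'':=b_2 a\in A$ and invoke the defining property of $b_1$ to obtain $\phi(a''\sigma_{-i}(b_1))=\phi(b_1 a'')=\phi(b_1 b_2 a)$. Chaining these equalities gives exactly the identity required for $b_1b_2\in\Bfr$.

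There is really no hard step here; the only point worth flagging is that one must be sure that in each application of the defining property the "$a$" slot is being filled by a genuine element of $A$ (not merely of $\Afr_\sigma$), which is automatic since $\sigma_{-i}(b_1)\in A$ and $b_2 a\in A$. Putting the three closure statements together completes the verification that $\Bfr$ is a subalgebra of $\Afr_\sigma$.
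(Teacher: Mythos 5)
Your proof is correct and follows essentially the same route as the paper's: the vector space closure is immediate, and the multiplicative closure is established by the identical chain $\phi(a\sigma_{-i}(b_1b_2))=\phi(a\sigma_{-i}(b_1)\sigma_{-i}(b_2))=\phi(b_2a\sigma_{-i}(b_1))=\phi(b_1b_2a)$, applying the defining property first to $b_2$ and then to $b_1$. Your remark that the ``$a$'' slot must be filled by an element of $A$ is a nice explicit check of a point the paper leaves implicit.
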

\begin{proof}
Clearly, $\Bfr$ is a vector subspace of $\Afr_{\sigma}$. 
We need only show that $\Bfr$ is closed under multiplication.
Let $b_1,b_2\in\Bfr$ and take $a\in A$.
Then we have
\[
\phi\big(a\sigma_{-i}(b_1b_2)\big)=\phi\big(a\sigma_{-i}(b_1)\sigma_{-i}(b_2)\big)
=\phi\big(b_2a\sigma_{-i}(b_1)\big)=\phi(b_1b_2a).
\]
Thus, $b_1b_2\in\Bfr$.
\end{proof}

\smallskip
The following result is well known (as are all in this section).
For convenience, we provide a proof.
\begin{lemma}\label{lem:faithful}
Let $\phi$ be a $\sigma$-KMS state of a C$^*$-algebra $A$ and suppose that the Gelfand-Naimark-Segal $($GNS$)$ representation $\pi_\phi$
of $\phi$ is faithful on $A$.
Then $\phi$ is faithful on $A$.
Moreover, the unique normal extension $\phih$ of $\phi$ to the von Neumann algebra $\Mcal=\pi_\phi(A)''$
generated by the image of the GNS representation of $A$, is faithful on $\Mcal$.
\end{lemma}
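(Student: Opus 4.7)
The plan is to first prove faithfulness of $\phi$ on $A$ by a direct Cauchy--Schwarz/KMS manipulation, and then to promote the argument to $\Mcal$ after extending $\sigma$ to a $w^*$-continuous automorphism group $\sigmahat$ of $\Mcal$ for which $\phih$ is KMS.

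For $\phi$ itself, suppose $\phi(a^*a)=0$. Cauchy--Schwarz gives $\phi(ya)=0$ for every $y\in A$. I then claim that $\phi(c^*a^*ac)=0$ for every $c\in A$; granting this, $\pi_\phi(ac)\xi_\phi=0$ for all $c$, so $\pi_\phi(a)$ annihilates the dense subspace $\pi_\phi(A)\xi_\phi$, and the assumed faithfulness of $\pi_\phi$ forces $a=0$. To prove the claim for analytic $c\in\Afr_\sigma$, I use the KMS identity in the form $\phi(Xc)=\phi(\sigma_i(c)X)$ (obtained from Prop.~\ref{prop:KMS}(b) with $b=\sigma_i(c)$, using Lemma~\ref{lem:sigmafacts}(iv)) with $X=c^*a^*a$, which rewrites $\phi(c^*a^*ac)=\phi\big(\sigma_i(c)c^*a^*\cdot a\big)$. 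The right-hand side vanishes by the first Cauchy--Schwarz bound applied to $y=\sigma_i(c)c^*a^*\in A$. Density of $\Afr_\sigma$ in $A$ and norm-continuity of $\phi$ extend the vanishing to every $c\in A$.

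For the second assertion, define unitaries $U_t$ on $\HEu_\phi$ by $U_t(\pi_\phi(a)\xi_\phi)=\pi_\phi(\sigma_t(a))\xi_\phi$; these are well-defined isometries by $\sigma$-invariance of $\phi$ (Rem.~\ref{rem:invariant}), extend to $\HEu_\phi$, and their adjoint action $\sigmahat_t=\mathrm{Ad}\,U_t$ yields a $w^*$-continuous automorphism group of $\Mcal$ with $\sigmahat_t\circ\pi_\phi=\pi_\phi\circ\sigma_t$. Condition~\eqref{it:KMSF} of Prop.~\ref{prop:KMS} for $\phi$, combined with normality of $\phih$ and $w^*$-density of $\pi_\phi(A)$ in $\Mcal$, then shows $\phih$ is $\sigmahat$-KMS. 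Now rerun the previous computation inside $\Mcal$: if $\phih(x^*x)=0$ and $z$ is $\sigmahat$-analytic, Cauchy--Schwarz and the KMS identity together give $\phih(z^*x^*xz)=0$. For a general $z\in\Mcal$, approximate it in the strong operator topology by the bounded Gaussian smears $z_n=\sqrt{n/\pi}\int e^{-nt^2}\sigmahat_t(z)\,dt\in\Mcal$, which are $\sigmahat$-analytic; then $xz_n\xi_\phi\to xz\xi_\phi$ in norm, so $\|xz\xi_\phi\|^2=\phih(z^*x^*xz)=0$. Hence $x$ annihilates the dense subspace $\Mcal\xi_\phi$, and $x=0$.

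The main obstacle is assembling the auxiliary package for $\Mcal$: existence and $w^*$-continuity of $\sigmahat$, $w^*$-density of $\sigmahat$-analytic elements in $\Mcal$, and the KMS property of $\phih$ with respect to $\sigmahat$. All three are standard (see Sec.~5.3 of~\cite{BR97}), but need to be set up carefully here since the preceding material only discusses KMS at the C$^*$-level. Once they are in place, the Cauchy--Schwarz/KMS computation is structurally identical in the two settings, with bounded approximation by analytic elements supplying in the von Neumann algebra setting what faithfulness of $\pi_\phi$ supplied on $A$.
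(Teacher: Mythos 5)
Your proof is correct, but it runs along a different track than the paper's. The paper argues via the observation that faithfulness of $\phi$ is equivalent to: for every nonzero $a$ there is some $b$ with $\phi(ba)\ne0$; it then produces such a $b$ of the form $\sigma_t(x)y^*$ by taking the boundary-value function $f_{y^*a,x}$ from condition (c) of Prop.~\ref{prop:KMS} and invoking the Schwarz reflection principle and uniqueness of analytic continuation to rule out $f(\cdot-i)\equiv0$; the von Neumann case is handled by the same reflection argument applied to the W$^*$-version of Prop.~5.3.7 of \cite{BR97}. You instead show directly that the left kernel of $\phi$ is a right ideal: from $\phi(a^*a)=0$ and the algebraic KMS identity $\phi(Xc)=\phi(\sigma_i(c)X)$ on entire analytic $c$ you get $\phi(c^*a^*ac)=0$, hence $\pi_\phi(a)=0$ on a dense subspace, and faithfulness of $\pi_\phi$ finishes it; in $\Mcal$ you substitute Gaussian regularization (which supplies strongly dense analytic elements) for the norm-density of $\Afr_\sigma$. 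Both routes lean on the same standard package for the second half (existence and continuity of $\sigmahat$, the KMS property of $\phih$), which you and the paper each outsource to Sec.~5.3 of \cite{BR97}. Your version uses only the equivalence (a)$\Leftrightarrow$(b) of Prop.~\ref{prop:KMS} rather than the strip-analyticity formulation, at the cost of having to justify the Gaussian smearing step; the paper's version avoids any density argument in $\Mcal$ but needs the reflection-principle input. Both are standard proofs of this well-known fact, and yours is complete as sketched.
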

\begin{proof}
Using the Cauchy-Schwarz inequality, we easily see that faithfulness of $\phi$ is equivalent to the existence, for every
nonzero $a\in A$,
of an element $b\in A$ such that $\phi(ba)\ne0$.

Let $a\in A$, $a\ne0$.  Since the GNS representation of $\phi$ is faithful,
there exist $x,y\in A$ such that $\phi(y^*ax)\ne0$.
Thus, $\phi(y^*a\sigma_t(x))\ne0$ for all real values $t$ in some neighborhood of $0$.
Let $f=f_{y^*a,x}$ be the bounded, continuous function on $\overline{D_{-1}}$ as described in~\eqref{it:KMSF} of Prop.~\ref{prop:KMS}.
By the Schwarz reflection principle and uniqueness of analytic continuation, it follows that
we cannot have that $f(t-i)=0$ for all $t\in\Reals$ (for more details, see, e.g., Prop.~5.3.6 of~\cite{BR97}).
Thus, there exists $t\in\Reals$ such that $0\ne f(t-i)=\phi(\sigma_t(x)y^*a)$.
Taking $b=\sigma_t(x)y^*$, this completes the proof that $\phi$ is faithful.

We now show faithfulness of $\phih$.
Note that the automorphism group $\sigma$ of $A$ extends to a unitarily implimented, pointwise strong-operator-topology
continuous, one-parameter automorphism group $\sigmahat=(\sigmahat_t)_{t\in\Reals}$ of $\Mcal$.
Thus, the state, $\phih$ is $\sigmahat$-KMS, according to Defn. 5.3.1 of~\cite{BR97}.
Now, using the part of Prop~5.3.7 of~\cite{BR97} that applies to  W$^*$-algebras, the proof proceeds as before.
In particular, for any nonzero $a\in\Mcal$, there exist $x,y\in\Mcal$ such that $\phih(y^*ax)\ne0$.
Then there is a function $f$
that is continuous on $\overline{D_{-1}}$ and analytic in $D_{-1}$ 
so that
\[
\forall t\in\Reals\qquad f(t)=\phih(y^*a\sigmahat_t(x)),\qquad f(t-i)=\phih(\sigmahat_t(x)y^*a).
\]
As before, this ensures that $\phih(\sigmahat_t(x)y^*a)\ne0$ for some $t\in\Reals$, and this implies $\phih(a^*a)\ne0$.
\end{proof}

\section{KMS states and amalgamated free products}
\label{sec:KMSamalg}

The next result is that free products of KMS states are KMS states.
\begin{thm}\label{thm:fpKMS}
Let $I$ be a nonempty set and for all $\iota\in I$ let $A_\iota$ be a unital C$^*$-algebra with one-parameter automorphism group
$\sigma^{(\iota)}$ and let $\phi_\iota$ be
a $\sigma^{(\iota)}$-KMS state on $A_\iota$.
Let
\[
(A,\phi)=\freeprod_{\iota\in I}(A_\iota,\phi_\iota)
\]
be the reduced free product of C$^*$-algebras.
Since each automorphism group $\sigma^{(\iota)}$ leaves $\phi_\iota$ invariant, we have the free product automorphism group
$\sigma$ on $A$ given by
$\sigma_t=\freeprod_{\iota\in I}\sigma^{(\iota)}_t$ for all $t\in\Reals$.
Then $\phi$ is a $\sigma$-KMS state on $A$.
\end{thm}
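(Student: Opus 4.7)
The plan is to verify condition (a) of Prop.~\ref{prop:KMS}. As the candidate $\sigma$-invariant dense $*$-subalgebra of $\Afr_\sigma$, I take $\Bfr\subset A$ to be the unital $*$-algebra generated by $\bigcup_{\iota\in I}\Afr_{\sigma^{(\iota)}}$, where each $\Afr_{\sigma^{(\iota)}}$ is viewed inside $A$ via the canonical embedding $A_\iota\hookrightarrow A$ of the reduced free product. By Lemma~\ref{lem:sigmafacts}, $\Bfr\subseteq\Afr_\sigma$; it is a $*$-subalgebra because each $\Afr_{\sigma^{(\iota)}}$ is one in $A_\iota$; it is $\sigma$-invariant because $\sigma_t|_{A_\iota}=\sigma^{(\iota)}_t$ leaves $\Afr_{\sigma^{(\iota)}}$ invariant; and it is norm-dense in $A$ because each $\Afr_{\sigma^{(\iota)}}$ is dense in $A_\iota$ and the $*$-algebra generated by $\bigcup_\iota A_\iota$ is norm-dense in the reduced free product.

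What remains is to show $\phi(a\sigma_{-i}(b))=\phi(ba)$ for all $a\in A$ and $b\in\Bfr$. Lemma~\ref{lem:subalg} tells me that the set of $b\in\Afr_\sigma$ for which this identity holds for every $a\in A$ is a subalgebra of $\Afr_\sigma$, so it suffices to verify the identity on a generating set of $\Bfr$. I therefore fix $\iota_0\in I$ and take $b\in\Afr_{\sigma^{(\iota_0)}}$. Both sides of the identity are norm-continuous in $a$, so I may further assume $a$ lies in the dense linear span of $1$ together with alternating words $a_1\cdots a_n$ ($n\geq 1$) with $a_j\in A_{\eta_j}\cap\ker\phi_{\eta_j}$ and $\eta_j\neq\eta_{j+1}$.

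Writing $b=\phi_{\iota_0}(b)\cdot 1+b^\circ$ with $b^\circ\in\ker\phi_{\iota_0}$, Remark~\ref{rem:invariant} gives $\phi_{\iota_0}(\sigma^{(\iota_0)}_{-i}(b^\circ))=\phi_{\iota_0}(b^\circ)=0$, so $c:=\sigma^{(\iota_0)}_{-i}(b^\circ)$ also lies in $\ker\phi_{\iota_0}$ and $\sigma_{-i}(b)=\phi_{\iota_0}(b)\cdot 1+c$. The identity then reduces to $\phi(ac)=\phi(b^\circ a)$, which I analyze by cases on whether $\eta_1$ and/or $\eta_n$ equals $\iota_0$: whenever an endpoint of $a$ sits adjacent to $b^\circ$ or $c$ inside $A_{\iota_0}$, I use the decomposition $A_{\iota_0}=\Cpx\cdot 1\oplus\ker\phi_{\iota_0}$ to split the resulting product. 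In every case \emph{except} the atomic one, the remaining expressions become scalars times alternating centered words of length $\geq 1$, which vanish under $\phi$ by freeness, giving $0=0$. The sole case where the KMS condition on $\phi_{\iota_0}$ is genuinely used is $a=a_1\in A_{\iota_0}\cap\ker\phi_{\iota_0}$, where $\phi(ac)=\phi_{\iota_0}(a_1\sigma^{(\iota_0)}_{-i}(b^\circ))$ and $\phi(b^\circ a)=\phi_{\iota_0}(b^\circ a_1)$ are equal precisely because $\phi_{\iota_0}$ is $\sigma^{(\iota_0)}$-KMS. The only real obstacle is the bookkeeping of the case split; no conceptual input beyond the vanishing of $\phi$ on alternating centered words and a single invocation of KMS on each $A_{\iota_0}$ is needed.
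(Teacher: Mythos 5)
Your proof is correct and follows essentially the same route as the paper: the paper omits a direct proof of Theorem~\ref{thm:fpKMS} (calling it straightforward and a special case of Theorem~\ref{thm:amalgfpKMS}), and the proof it gives of that generalization is exactly your argument with $\Cpx$ replaced by $B$ --- invoke Lemma~\ref{lem:subalg} to reduce to $b\in\Afr_{\sigma^{(\iota_0)}}$, handle $a\in A_{\iota_0}$ via the KMS property of $\phi_{\iota_0}$, and kill all other alternating words by freeness. Your explicit splitting $b=\phi_{\iota_0}(b)1+b^\circ$ and the endpoint case analysis just spell out the bookkeeping the paper compresses into ``by freeness, $E(ba)$ and $E(a\sigma_{-i}(b))$ are both zero.''
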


The proof of the above result is straightforward.
The result is also a special case of the following one.
We will give a proof of the following result
directly, using C$^*$-algebra concepts.
However, it could also be proved by using Ueda's description~\cite{U99}
of the modular automorphism group in an amalgamated free product
and the fact, (see \cite{T03}, Thm. VIII.1.2) that, for a given faithful normal state on a von Neumann algebra,
the state is KMS for the modular automorphism group of the state and for no other one-parameter automorphism group.

\begin{thm}\label{thm:amalgfpKMS}
Let $B$ be a unital C$^*$-algebra.
Let $I$ be a nonempty set and for all $\iota\in I$ let $A_\iota$ be a unital C$^*$-algebra containing $B$
as a unital C$^*$-subalgebra
and let $E_\iota:A_\iota\to B$ be a conditional expectation having faithful GNS representation.
Let
\[
(A,E)=\freeprodamalg{B}{\iota\in I}(A_\iota,E_\iota)
\]
be the reduced amalgamated free product of C$^*$-algebras.

Suppose, for each $\iota\in I$,
$\sigma^{(\iota)}$ is a one-parameter automorphism group of $A_\iota$ and suppose
$\sigma^{(\iota)}_t\circ E_\iota=E_\iota\circ\sigma^{(\iota)}_t$
for all $t\in\Reals$.
Suppose also, for all $t\in\Reals$, that the automorphism of $B$ obtained by restricting $\sigma^{(\iota)}_t$ to $B$
is the same for all $\iota\in I$.
There is a one-parameter automorphism group $\sigma$ given by $\sigma_t=\freeprod_{\iota\in I}\sigma^{(\iota)}_t$ for all $t\in\Reals$,
which we call the free product automorphism group of the $\sigma^{(\iota)}$ for $\iota\in I$. 

Suppose $\rho$ is a state on $B$ such that, for every $\iota\in I$, $\rho\circ E_\iota$ is a $\sigma^{(\iota)}$-KMS state of $A_\iota$.
Then $\rho\circ E$ is a $\sigma$-KMS state of $A$.
\end{thm}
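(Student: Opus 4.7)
The plan is to verify condition~(a) of Proposition~\ref{prop:KMS} for $\phi=\rho\circ E$ at $\beta=-1$. I begin with three structural observations. First, the universal property of the amalgamated free product, combined with the hypothesis that $\sigma^{(\iota)}_t|_B$ does not depend on $\iota$, shows that $\sigma_t$ restricts to $\sigma^{(\iota)}_t$ on each $A_\iota$; in particular, every entire analytic element of $A_\iota$ with respect to $\sigma^{(\iota)}$ is an entire analytic element of $A$ with respect to $\sigma$, with the same value $\sigma_z(\cdot)$. Second, since each $\sigma^{(\iota)}_t$ commutes with $E_\iota$, checking on each $A_\iota$ and on reduced alternating words gives $E\circ\sigma_t=(\sigma_t|_B)\circ E$. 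Third, each $\rho\circ E_\iota$ being $\sigma^{(\iota)}$-KMS implies, via Remark~\ref{rem:invariant}, that $\rho$ is $\sigma_t|_B$-invariant on $B$, and hence $\phi$ is $\sigma$-invariant.

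Let $\Bfr_0$ be the subalgebra of $A$ generated by $\bigcup_{\iota\in I}\Afr_{\sigma^{(\iota)}}$. Since each $\Afr_{\sigma^{(\iota)}}$ is $*$-closed by Lemma~\ref{lem:sigmafacts}(iii), $\Bfr_0$ is a $*$-subalgebra; it lies in $\Afr_\sigma$ and is $\sigma$-invariant by the first observation, and it is norm dense in $A$ because each $\Afr_{\sigma^{(\iota)}}$ is dense in $A_\iota$ and the $*$-algebra generated by $\bigcup_\iota A_\iota$ is dense in the amalgamated free product. By Lemma~\ref{lem:subalg}, the set
\[
\Bfr=\{b\in\Afr_\sigma:\;\forall a\in A,\;\phi(a\sigma_{-i}(b))=\phi(ba)\}
\]
is a subalgebra of $\Afr_\sigma$, so it suffices to prove $\Afr_{\sigma^{(\iota)}}\subseteq\Bfr$ for every $\iota\in I$; then $\Bfr_0\subseteq\Bfr$, and Proposition~\ref{prop:KMS}(a) completes the proof.

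Fix $\iota\in I$ and $b\in\Afr_{\sigma^{(\iota)}}$. By norm-continuity in $a$, it is enough to verify $\phi(a\sigma_{-i}(b))=\phi(ba)$ for $a$ in a norm-dense subset of $A$; I take the union of $B$ with the reduced alternating products $a=a_1\cdots a_n$, where $a_j\in A_{\iota_j}\cap\ker E_{\iota_j}$ and $\iota_j\ne\iota_{j+1}$. When $a\in A_\iota$ (in particular when $a\in B$), both $a\sigma_{-i}(b)=a\sigma^{(\iota)}_{-i}(b)$ and $ba$ lie in $A_\iota$, and the identity is immediate from the $\sigma^{(\iota)}$-KMS property of $\phi|_{A_\iota}=\rho\circ E_\iota$. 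For the reduced alternating case I would use Lemma~\ref{lem:Eanalytic} to split $b=E_\iota(b)+b^\circ$ with $E_\iota(b)\in B$ and $b^\circ\in A_\iota\cap\ker E_\iota$, both entire analytic, and check the identity on each summand, showing that both $\phi(a\sigma_{-i}(b))$ and $\phi(ba)$ vanish by freeness. In each subcase (determined by whether $\iota_1$ and $\iota_n$ equal $\iota$) any collision between $b$ or $\sigma_{-i}(b)$ and an end letter from $A_\iota$ is resolved by decomposing the resulting element of $A_\iota$ into its $B$-part and its centered part: the centered part simply extends the reduced alternating word, while the $B$-part is absorbed into the adjacent letter $a_j$, which remains in $\ker E_{\iota_j}$ after left or right multiplication by an element of $B$. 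In every case the product is (a sum of) reduced alternating words, on which $E$, and hence $\phi$, vanishes.

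The main obstacle is the bookkeeping of this last case analysis, particularly when $\iota_1=\iota_n=\iota$ so that collisions occur at both ends simultaneously. It is worth noting that the $\sigma^{(\iota)}$-KMS hypothesis on $\rho\circ E_\iota$ actually enters only through the base case $a\in A_\iota$; the reduced alternating subcases rely purely on freeness, i.e., the vanishing of $E$ on reduced alternating words together with the fact that $\sigma^{(\iota)}_{-i}$ preserves both $B$ and $A_\iota\cap\ker E_\iota$.
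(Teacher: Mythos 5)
Your proposal is correct and takes essentially the same route as the paper's proof: reduce via Lemma~\ref{lem:subalg} to showing $\Afr_{\sigma^{(\iota)}}\subseteq\Bfr$ for each $\iota$, then verify the identity $\phi(a\sigma_{-i}(b))=\phi(ba)$ for $a\in A_\iota$ (where it is exactly the KMS hypothesis on $\rho\circ E_\iota$) and for reduced alternating words (where both sides vanish by freeness). The bookkeeping you flag as the main obstacle is routine and is dispatched in one line in the paper; in particular, no simultaneous two-ended collision occurs, since $b$ meets only the first letter of $a$ in $\phi(ba)$ and $\sigma_{-i}(b)$ only the last letter in $\phi(a\sigma_{-i}(b))$.
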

\begin{proof}
The existence of the free product automorphism $\sigma_t$ for each $t$ is standard and is not difficult to verify.
These clearly form an automorphism group.
The required pointwise norm-continuity of this group is easy to verify by considering actions on the dense subalgebra of $A$ generated by
$\bigcup_{i\in I}A_\iota$.

Let $\phi=\rho\circ E$.
Let $\Afr^{(\iota)}$ denote the $*$-algebra of elements in $A_\iota$ that are analytic for the auotomorphism group $\sigma^{(\iota)}$
and let $\Afr$ denote the $*$-algebra of elements of $A$ that are analytic for the automorphism group $\sigma$.
Let
\[
\Bfr=\{b\in\Afr\mid \forall a\in A, \phi(a\sigma_{-i}(b))=\phi(ba)\}.
\]
By Prop.~\ref{prop:KMS}, it will suffice to show that $\Bfr$ is dense in $A$.
By Lemma~\ref{lem:subalg}, it will suffice to show that $\Afr^{(\iota)}\subseteq\Bfr$ for all $\iota\in I$.
Indeed, each $\Afr^{(\iota)}$ is dense in $A_\iota$, so the algebra they generate will be dense in $A$.

Fix $\iota\in I$ and $b\in\Afr^{(\iota)}$.
Note that $\sigma_{-i}(b)\in A_\iota$ (see Lemma~\ref{lem:Eanalytic}).
In order to show that $b\in\Bfr$, it will suffice to show that the equality
\begin{equation}\label{eq:phiab}
\phi(a\sigma_{-i}(b))=\phi(ba)
\end{equation}
holds
(i) for all $a\in A_\iota$ and (ii) for all $a$ of the form $a=a_1a_2\cdots a_n$ for $n\ge1$ and $a_j\in A_{\iota(j)}\cap\ker E_{i(j)}$,
for some $\iota(1),\ldots,\iota(n)\in I$ with $\iota(j)\ne\iota(j+1)$ for all $1\le j\le n-1$,
and with either $n\ge2$ or with $\iota(1)\ne\iota$.
Indeed, the set of all such elements $a$ densely spans the algebra $A$.
If $a\in A_\iota$ as in~(i) above, then the identity~\eqref{eq:phiab} holds because 
the restriction of $\phi$ to $A_\iota$ equals $\rho\circ E_\iota$, which is KMS by hypothesis.
If $a$ is as in~(ii) above, then by freeness, we see that $E(ba)$ and $E(a\sigma_{-i}(b))$ are both zero,
so both sides of~\eqref{eq:phiab} vanish.
\end{proof}

\section{KMS quantum symmetric states}
\label{sec:QSKMSS}

Let $A$ be a unital C$^*$-algebra and let
 $*_1^\infty A$ denote the universal unital free product C$^*$-algebra of $A$ with itself infinitely many times.
Recall that $\QSS(A)$ denotes the set of states of $*_1^\infty A$ that are quantum symmetric, and
Thm.~7.3 of~\cite{DKW} gives a characterization of these in terms of amalgamated free products.
In particular, it provides a bijection 
\begin{equation}\label{eq:Psi}
\Psi:\Vc(A)\to\QSS(A),
\end{equation}
where $\Vc(A)$ is the set of all quintuples $(B,D,F,\theta,\rho)$ with $D$ a unital C$^*$-subalgebra of $B$,
$F:B\to D$ a conditional expectation, $\theta:A\to B$ a unital $*$-homomorphism and $\rho$ a state on $D$, subject to some other
conditions (see Defn.~7.1 of~\cite{DKW} for the full description);
the state $\Psi((B,D,F,\theta,\rho))$ arises as the composition $\rho\circ G\circ(*_1^\infty\theta)$,
where $(Y,G)=(*_D)_1^\infty(B,F)$ is the reduced amalgamated free product of C$^*$-algebras.

\begin{defi}
Let $A$ be a unital C$^*$-algebra and
suppose $\sigma=(\sigma_t)_{t\in\Reals}$ is a continuous one-parameter automorphism group of $A$.
Let $*_1^\infty\sigma$ denote the one-parameter automorphism group of $*_1^\infty A$ given by
$(*_1^\infty\sigma)_t=*_1^\infty\sigma_t$.
Let $\QSS_\sigma(A)$ denote the set of all $\psi\in\QSS(A)$ that are $(*_1^\infty\sigma)$-KMS states.
To be short, we will call these the {\em KMS quantum symmetric states} of $(A,\sigma)$.
\end{defi}

In this section, we describe the restriction of the bijection~\eqref{eq:Psi} that characterizes the KMS quantum symmetric states.
The KMS condition imposes further restrictions on elements of $\Vc(A)$ that result in simplifications as compared to
Defn.~7.1 of~\cite{DKW}.
This makes it appropriate to consider the following new definition.
\begin{defi}\label{def:VsigmaA}
Let $A$ be a unital C$^*$-algebra and let $\sigma=(\sigma_t)_{t\in\Reals}$ be a one-parameter group of automorphisms of $A$.
Let $\Vc_\sigma(A)$ be the set of all equivalence classes of sextuples $(B,D,F,\theta,\gamma,\rho)$, such that
\begin{enumerate}[(i)]
\item\label{it:B} $B$ is a unital $C^*$-algebra,
\item $D$ is a unital $C^*$-subalgebra of $B$,
\item $F:B\to D$ is a faithful conditional expectation onto $D$,
\item $\theta:A\to B$ is a unital $*$-homomorphism,
\item\label{it:ADgen} $\theta(A)\cup D$ generates $B$ as a $C^*$-algebra,
\item\label{it:Dsmallest} $D$ is the smallest unital $C^*$-subalgebra of $B$ that satisfies
\begin{equation}\label{eq:Fdas}
F\big(x_0\theta(a_1)x_1\cdots\theta(a_n)x_n\big)\in D,
\end{equation}
whenever $n\in\Nats$, $x_0,\ldots,x_n\in D$ and $a_1,\ldots,a_n\in A$,
\item\label{it:gammagp} $\gamma=(\gamma_t)_{t\in\Reals}$ is a one-parameter automorphism group of $B$,
\item\label{it:gammaF} for all $t\in\Reals$, $\gamma_t\circ\theta=\theta\circ\sigma_t$ and $\gamma_t\circ F=F\circ\gamma_t$,
\item\label{it:gammaKMS} $\rho$ is a faithful state on $D$ and $\rho\circ F$ is a $\gamma$-KMS state of $B$,
\end{enumerate}
and where sextuples $(B,D,F,\theta,\gamma,\rho)$ and $(B',D',F',\theta',\gamma',\rho')$ are defined to be equivalent
if and only if there is a $*$-isomorphism $\pi:B\to B'$ sending $D$ onto $D'$ and so that $\pi\circ F=F'\circ\pi$,
$\pi\circ\theta=\theta'$, $\rho'\circ\pi\restrict_D=\rho$ and $\gamma_t'\circ\pi=\pi\circ\gamma_t$ for all $t\in\Reals$.
\end{defi}

\begin{remark}
\begin{enumerate}[(a)]
\item Just as explained in Rem.~7.2(a) of~\cite{DKW}, for any given $A$
it is possible to select a Hilbert space
$\HEu$ having a basis of sufficiently large cardinality so that every sextuple $(B,D,F,\theta,\gamma,\rho)$ in Defn.~\ref{def:VsigmaA}
is equivalent to $(B',D',F',\theta',\gamma',\rho')$, where $B'$ is a C$^*$-algebra in $\Bc(\HEu)$.
In order to avoid set theoretic difficulties, we should restrict to sextuples involving C$^*$-subalgebras of $\Bc(\HEu)$,
but we will use sloppy language and refer to ``all sextuples.''
\item  Just as explained in Rem.~7.2(b) of~\cite{DKW}, we will suppress the notation for equivalence classes and just write 
$(B,D,F,\theta,\gamma,\rho)\in\Vc_\sigma(A)$.
\end{enumerate}
\end{remark}

\begin{lemma}\label{lem:autounique}
Suppose $(B,D,F,\theta)$ satisfy properties \eqref{it:B}--\eqref{it:Dsmallest} of Defn.~\ref{def:VsigmaA}
and $\alpha$ is an automorphism of $A$.
Then there is at most one automorphism $\gamma$ of $B$ such that 
$\gamma\circ\theta=\theta\circ\alpha$ and $\gamma\circ F=F\circ\gamma$.
\end{lemma}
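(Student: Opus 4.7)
My plan is to show that any two such automorphisms $\gamma_1,\gamma_2$ must agree on the generating set $\theta(A)\cup D$ provided by property~\eqref{it:ADgen}, and therefore on all of $B$. Agreement on $\theta(A)$ is immediate from the intertwining condition $\gamma_i\circ\theta=\theta\circ\alpha$, which forces $\gamma_1(\theta(a))=\theta(\alpha(a))=\gamma_2(\theta(a))$ for every $a\in A$. The substantive point is agreement on $D$.

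For that, I would introduce the set
\[
D'=\{d\in D\mid \gamma_1(d)=\gamma_2(d)\},
\]
which is visibly a unital $C^*$-subalgebra of $B$ (contained in $D$). The plan is to show that $D'$ satisfies the closure property appearing in~\eqref{it:Dsmallest}, so that the minimality of $D$ forces $D\subseteq D'$, and hence $D=D'$. Fix $n\in\Nats$, elements $x_0,\ldots,x_n\in D'$, and $a_1,\ldots,a_n\in A$. Using $\gamma_i\circ F=F\circ\gamma_i$ and $\gamma_i\circ\theta=\theta\circ\alpha$, one computes
\[
\gamma_i\bigl(F(x_0\theta(a_1)x_1\cdots\theta(a_n)x_n)\bigr)
=F\bigl(\gamma_i(x_0)\theta(\alpha(a_1))\gamma_i(x_1)\cdots\theta(\alpha(a_n))\gamma_i(x_n)\bigr).
\]
Since $\gamma_1(x_j)=\gamma_2(x_j)$ for each $j$ by the choice of $D'$, the right-hand side is independent of $i$, so $F(x_0\theta(a_1)x_1\cdots\theta(a_n)x_n)\in D'$. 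Thus $D'$ enjoys the closure property of~\eqref{it:Dsmallest}; being a unital $C^*$-subalgebra of $B$, minimality gives $D\subseteq D'$, hence $\gamma_1\restrict_D=\gamma_2\restrict_D$.

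Finally, since $\gamma_1$ and $\gamma_2$ are $*$-homomorphisms that agree on $\theta(A)\cup D$, they agree on the $*$-algebra generated by this set, and by continuity on its closure, which by~\eqref{it:ADgen} is all of $B$.

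The only point needing any care is verifying the closure condition for $D'$, and I would note in passing that the intertwining $\gamma_i\circ F=F\circ\gamma_i$ together with $F(B)=D$ already guarantees $\gamma_i(D)=D$, so the computation above stays inside $D$; the real content is just that the common intertwining relations force the two automorphisms to give the same value on each element produced by the $F$-closure construction that generates $D$.
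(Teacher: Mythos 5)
Your proof is correct and follows essentially the same route as the paper's: both rest on the identity $\gamma_i\bigl(F(x_0\theta(a_1)\cdots\theta(a_n)x_n)\bigr)=F\bigl(\gamma_i(x_0)\theta(\alpha(a_1))\cdots\theta(\alpha(a_n))\gamma_i(x_n)\bigr)$ together with the minimality property~\eqref{it:Dsmallest} and the generation property~\eqref{it:ADgen}. The only (cosmetic) difference is that you apply minimality directly to the equalizer subalgebra $D'=\{d\in D\mid\gamma_1(d)=\gamma_2(d)\}$, whereas the paper first exhausts $D$ as the closure of a recursively defined increasing union of subalgebras $D_n$ and then argues by induction on $n$; your variant is, if anything, slightly more economical.
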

\begin{proof}
Using property~\eqref{it:Dsmallest}, we see $D=\overline{\bigcup_{n=1}^\infty D_n}$, where $D_0=\Cpx1$ and where, for $n\ge1$,
$D_n$ is defined recursively to be the algebra generated by
\[
\big\{F\big(\theta(a_1)d_1\cdots\theta(a_{k-1})d_{k-1}\theta(a_k)\big)\mid k\in\Nats,\,a_1,\ldots,a_k\in A,\,d_1,\ldots,d_{k-1}\in D_{n-1}\big\}.
\]
We see by induction on $n$ that $\gamma$ is uniquely determined on $D_n$.
Indeed, the case $n=0$ is clear, while the induction step follows from the identity
\begin{multline*}
\gamma\big(F\big(\theta(a_1)d_1\cdots\theta(a_{k-1})d_{k-1}\theta(a_k)\big)\big) \\
=F\big(\theta(\alpha(a_1))\gamma(d_1)\cdots\theta(\alpha(a_{k-1}))\gamma(d_{k-1})\theta(\alpha(a_k))\big).
\end{multline*}
This implies that $\gamma$ is uniquely determined on $D$.
It is clearly uniquely determined on $\theta(A)$ and, therefore also on $C^*(\theta(A)\cup D)=B$.
\end{proof}

\begin{thm}\label{thm:Psisig}
There is a bijection
\[
\Psi_\sigma:\Vc_\sigma(A)\to\QSS_\sigma(A)
\]
defined by
\begin{equation}\label{eq:Psisig}
\Psi_\sigma(B,D,F,\theta,\gamma,\rho)=\Psi(B,D,F,\theta,\rho),
\end{equation}
where $\Psi$ is the bijection in~\eqref{eq:Psi} from~\cite{DKW}.
\end{thm}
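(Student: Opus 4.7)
My plan is to verify bijectivity of $\Psi_\sigma$ in three parts, with Lemma~\ref{lem:autounique} playing a key role throughout.

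\emph{Well-definedness.} Given $(B,D,F,\theta,\gamma,\rho)\in\Vc_\sigma(A)$, write $(Y,G)=(*_D)_1^\infty(B,F)$, so that $\psi := \Psi(B,D,F,\theta,\rho) = \rho\circ G\circ(*_1^\infty\theta)$ lies in $\QSS(A)$ by~\cite{DKW}. I apply Thm.~\ref{thm:amalgfpKMS} with every $A_\iota$ equal to $B$ and every $\sigma^{(\iota)}$ equal to $\gamma$: the required hypotheses $\gamma_t\circ F=F\circ\gamma_t$, common restriction of $\gamma_t$ to $D$ across copies, and KMS-ness of $\rho\circ F$ are supplied by conditions (viii)--(ix) of Defn.~\ref{def:VsigmaA}. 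This makes $\rho\circ G$ a $(*_1^\infty\gamma)$-KMS state on $Y$, and since $*_1^\infty\theta$ intertwines $*_1^\infty\sigma$ with $*_1^\infty\gamma$, Remark~\ref{rem:homomKMS} gives $\psi\in\QSS_\sigma(A)$.

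\emph{Injectivity.} If the sextuples have the same image, injectivity of $\Psi$ supplies a $*$-isomorphism $\pi:B\to B'$ witnessing equivalence of the underlying 5-tuples. The automorphism $\pi^{-1}\circ\gamma'_t\circ\pi$ of $B$ meets the hypotheses of Lemma~\ref{lem:autounique}, so it equals $\gamma_t$, showing $\pi$ also intertwines $\gamma$ and $\gamma'$.

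\emph{Surjectivity.} Given $\psi\in\QSS_\sigma(A)$, set $(B,D,F,\theta,\rho):=\Psi^{-1}(\psi)\in\Vc(A)$. For each $t\in\Reals$, the 5-tuple $(B,D,F,\theta\circ\sigma_t,\rho)$ still lies in $\Vc(A)$, since $\sigma_t(A)=A$ leaves the generation and minimality conditions invariant, and $\Psi(B,D,F,\theta\circ\sigma_t,\rho)=\psi\circ(*_1^\infty\sigma_t)=\psi$ by $*_1^\infty\sigma$-invariance of KMS states (Remark~\ref{rem:invariant}). Injectivity of $\Psi$ then produces a $*$-isomorphism $\gamma_t:B\to B$ satisfying $\gamma_t\circ\theta=\theta\circ\sigma_t$, $\gamma_t\circ F=F\circ\gamma_t$, $\gamma_t(D)=D$, and $\rho\circ\gamma_t|_D=\rho$; Lemma~\ref{lem:autounique} makes $\gamma_t$ unique, forcing $\gamma_0=\id$ and $\gamma_{s+t}=\gamma_s\circ\gamma_t$. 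Pointwise norm-continuity is checked on $\theta(A)$ from continuity of $\sigma$, propagated through the inductive layers $D_n$ from the proof of Lemma~\ref{lem:autounique}, and extended to all of $B$ by a three-$\epsilon$ argument, using that each $\gamma_t$ is an isometry.

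The main obstacle is showing that $\rho\circ F$ is $\gamma$-KMS. I plan to address this at the von Neumann level. The normal extension $\hat\psi$ on $\Mcal=\pi_\psi(*_1^\infty A)''$ is a faithful KMS state for the extended group $\hat\tau$ of $*_1^\infty\sigma$. The construction of $\Psi^{-1}$ in~\cite{DKW} realizes $B$ as a C$^*$-subalgebra of the first-factor von Neumann algebra $\Mcal_1\subset\Mcal$ and $F$ as the restriction of the canonical $\hat\psi$-preserving normal conditional expectation $\hat E_1:\Mcal_1\to\Dc$, with $\rho\circ F=\hat\psi|_B$. Since $\hat\tau_t$ acts as $\sigma_t$ on $\pi_\psi(A_1)$ and commutes with $\hat E_1$ (by Takesaki's theorem, as $\hat\tau$ is the modular group of $\hat\psi$ up to sign), condition (vi) of Defn.~\ref{def:VsigmaA} forces $\hat\tau_t(D)=D$ by the smallest-subalgebra argument from Lemma~\ref{lem:autounique}; Lemma~\ref{lem:autounique} then identifies $\hat\tau_t|_B=\gamma_t$. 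KMS-ness of $\rho\circ F$ for $\gamma$ finally descends from KMS-ness of $\hat\psi$ for $\hat\tau$ via Remark~\ref{rem:homomKMS} applied to the inclusion $B\hookrightarrow\Mcal$.
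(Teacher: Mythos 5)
Your treatment of well-definedness and injectivity matches the paper's: Thm.~\ref{thm:amalgfpKMS} followed by Rem.~\ref{rem:homomKMS} for the first, and Lemma~\ref{lem:autounique} for the second. In the surjectivity argument, your construction of the automorphisms $\gamma_t$ is genuinely different and quite clean: the paper builds $\gamma_t$ concretely as the restriction to $B$ of the unitarily implemented extension $\sigmathat_t$ of $*_1^\infty\sigma_t$ to $\Mcal_\psi=\pi_\psi(*_1^\infty A)''$, whereas you obtain $\gamma_t$ abstractly by observing that $(B,D,F,\theta\circ\sigma_t,\rho)$ represents the same state (by $\sigma$-invariance of KMS states) and invoking injectivity of $\Psi$; Lemma~\ref{lem:autounique} then yields the group law, and your continuity argument is the standard one. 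This buys a shorter route to the existence of $\gamma$, at the cost that you must still descend to the von Neumann level to prove the KMS property of $\rho\circ F$ --- which is where the paper starts.

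Two points in that last part need repair. First, your appeal to Takesaki's theorem to obtain commutation of $\sigmathat_t$ (your $\hat\tau_t$) with the conditional expectation is circular as written: Takesaki's theorem presupposes that the range subalgebra is globally invariant under the modular group, yet you derive the invariance of $D$ from that commutation. The paper avoids this by proving $F_\psi\circ\sigmathat_t=\sigmathat_t\circ F_\psi$ directly from the fact that $\sigmathat_t$ commutes with the shift automorphism $\alpha$ (using $\alpha\circ\pi_i=\pi_{i+1}$ and $\sigmathat_t\circ\pi_i=\pi_i\circ\sigma_t$), and only afterwards deduces invariance of the algebras $D_{\psi,n}$ and of $D$; note also that the tail algebra $\Tc_\psi$ is manifestly $\sigmathat$-invariant without any such input. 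Second, you never verify that $F$ and $\rho$ are faithful, which Defn.~\ref{def:VsigmaA} requires and which is not part of membership in $\Vc(A)$; the paper closes this by citing Rem.~7.2(c) of~\cite{DKW} for faithfulness of the GNS representation of $\rho\circ G$ and then applying Lemma~\ref{lem:faithful} to the KMS state $\rho\circ G$. Both gaps are fillable. A final caution: Rem.~\ref{rem:homomKMS} is stated for point-norm continuous automorphism groups, and $\sigmathat$ on $\Mcal_\psi$ is only point-strongly continuous, so the descent of the KMS property to $B$ should instead go through the W$^*$-version of Prop.~5.3.7 of~\cite{BR97}, as the paper does when restricting $\psih$ to $\sigmathat$-invariant C$^*$-subalgebras.
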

\begin{proof}
From Lemma~\ref{lem:autounique}, it is clear that the map $\Vc_\sigma(A)\ni(B,D,F,\theta,\gamma,\rho)\mapsto(B,D,F,\theta,\rho)$
that forgets $\gamma$ is injective. Moreover, the faithfulness of $F$ and $\rho$, assumed in Defn.~\ref{def:VsigmaA}, guarantees that $(B,D,F,\theta,\rho)\in\Vc(A)$ (see \eqref{eq:Psi}).
Thus, the definition~\eqref{eq:Psisig} of $\Psi_\sigma$ makes sense and defines an injection from $\Vc_\sigma(A)$ into $\QSS(A)$.

To show that $\Psi_\sigma$ maps $\Vc_\sigma(A)$ into $\QSS_\sigma(A)$, suppose $\psi=\Psi_\sigma(B,D,F,\theta,\gamma,\rho)$
and let us show that $\psi$ is a $(*_1^\infty\sigma)$-KMS state.
By Thm.~7.3 of~\cite{DKW}, the state $\psi=\Psi(B,D,F,\theta,\rho)$ is constructed as $\psi=\rho\circ G\circ(*_1^\infty\theta)$,
where $(Y,G)=(*_D)_1^\infty(B,F)$ is the amalgamated free product of C$^*$-algebras.
Considering the assumptions \eqref{it:gammagp}--\eqref{it:gammaKMS} of Defn.~\ref{def:VsigmaA} and
applying Thm.~\ref{thm:amalgfpKMS}, we see that,
with respect to the one-parameter automorphism group $\gammat:=(*_1^\infty\gamma_t)_{t\in\Reals}$,
the state $\rho\circ G$ of the C$^*$-algebra $Y$ is $\gammat$--KMS.
Now noting that $(*_1^\infty\gamma_t)\circ(*_1^\infty\theta)=(*_1^\infty\theta)\circ(*_1^\infty\sigma_t)$ and
applying the observation made in Rem.~\ref{rem:homomKMS}, we conclude that
$\psi$ is a $(*_1^\infty\sigma)$-KMS state.

In order to show that $\Psi_\sigma$ maps $\Vc_\sigma(A)$ onto $\QSS_\sigma(A)$, let $\psi\in\QSS_\sigma(A)$ and, by virtue of the bijection~\eqref{eq:Psi}, let
$(B,D,F,\theta,\rho)\in\Vc(A)$ be such that $\psi=\Psi(B,D,F,\theta,\rho)$.
We will construct a one-parameter automorphism group $\gamma=(\gamma_t)_{t\in\Reals}$ of $B$ so that
$(B,D,F,\theta,\gamma,\rho)\in\Vc_\sigma(A)$, which will complete the proof.
For this, we will work with the construction of $(B,D,F,\theta,\rho)$ from $\psi$, found in Subsection~5.1 of~\cite{DKW}.
Write $\sigmat_t$ for the automorphism $*_1^\infty\sigma_t$ of $*_1^\infty A$ and $\sigmat$ for the one parameter automorphism group $(\sigmat_t)_{t\in\Reals}$.
Since $\psi$ is $\sigmat$-KMS, we have $\psi\circ\sigmat_t=\psi$ for all $t$.
Recall that, for $i\in\Nats$,
$\lambda_i:A\to*_1^\infty A$ denotes the $*$-homomorphism sending $A$ onto the $i$-th copy in the free product construction,
that $\Mcal_\psi$ denotes the von Neumann algebra generated by the image of $*_1^\infty A$ under the GNS representation $\pi_\psi$
of $\psi$ and that $\pi_i=\pi_\psi\circ\lambda_i$.
Since $\sigmat_t$ leaves $\psi$ invariant and is point-norm continuous in $t$,
it yields a unitarily implemented $*$-automorphism $\sigmathat_t$ of $\Mcal_\psi$
satisfying  $\sigmathat_t\circ\pi_\psi=\pi_\psi\circ\sigma_t$ and, thus,
\begin{equation}\label{eq:sigpi}
\sigmathat_t\circ\pi_i=\pi_i\circ\sigma_t, \text{ }t\in\Reals,
\end{equation}
for every $i\in\Nats$.
The implementing unitary is $U_t$ defined by $\xh\mapsto(\sigma_t(x))\hat{\;}$ for $x\in*_1^\infty A$, and is strongly continuous in $t$ (and this choice is unique on fixing the image of the standard vacuum vector).

Recall that the tail algebra is
\[
\Tc_\psi =\bigcap_{N=1}^\infty W^*\big(\bigcup_{i=N}^\infty\pi_i(A)\big).
\]
It is, clearly, globally invariant under $\sigmathat_t$.
Thus, $\Qc_0$, which is the $*$-algebra generated by $\Tc_\psi\cup\bigcup_{i=1}^\infty\pi_i(A)$, is also globally invariant under $\sigmathat_t$
and, therefore, so is its closure $\Qc_\psi$.
Lemma~5.1.9 of~\cite{DKW} constructed a $*$-automorphism $\alpha$ of $\Qc_\psi$ that restricts to the identity on $\Tc_\psi$
and satisfies, for every $i\in\Nats$, $\alpha\circ\pi_i=\pi_{i+1}$,
and Lemma~5.1.10 of~\cite{DKW} constructs a conditional expectation $F_\psi$ from $\Qc_\psi$ onto $\Tc_\psi$ by setting
\[
F_\psi(x)=\operatorname{WOT--}\lim_{n\to\infty}\alpha^n(x),
\]
where ``WOT'' means weak operator topology.
We have
\[
\alpha\circ\sigmathat_t(\pi_i(a))=\alpha\circ\pi_i\circ\sigma_t(a)=\pi_{i+1}\circ\sigma_t(a)=\sigmathat_t\circ\pi_{i+1}(a)
=\sigmathat_t\circ\alpha(\pi_i(a)), \text{ }a\in A.
\]
This implies that $\alpha$ commutes with $\sigmathat_t$.
Since $\sigmathat_t$ is WOT to WOT continuous, this yields $F_\psi\circ\sigmathat_t=\sigmathat_t\circ F_\psi$.
By definition (see Observation~5.3.3 of~\cite{DKW}) the tail C$^*$-algebra is
\[
D=\overline{\bigcup_{n=0}^\infty D_{\psi,n}}\subseteq\Tc_\psi,
\]
where $D_{\psi,0}=\Cpx1$ and where
$D_{\psi,n}$ is defined recursively to be the algebra generated by
\begin{multline*}
\big\{F_\psi\big(\pi_{i_1}(a_1)d_1\cdots\pi_{i_{k-1}}(a_{k-1})d_{k-1}\pi_{i_k}(a_k)\big)\,\big| \\
k\in\Nats,\,i_1,\ldots,i_k\in\Nats,\,a_1,\ldots,a_k\in A,\,
d_1,\ldots,d_{k-1}\in D_{\psi,n-1}\big\}.
\end{multline*}
Now using~\eqref{eq:sigpi}, we easily see by induction on $n$ that
\begin{itemize}
\item[$\bullet$] $\sigmathat_t(D_{\psi,n})\subseteq D_{\psi,n}$ for every $t\in\Reals$,
\item[$\bullet$] $\sigmathat_s\circ\sigmathat_t(d)=\sigmathat_{s+t}(d)$, for every $s,t\in\Reals$ and $d\in D_{\psi,n}$,
\item[$\bullet$] the maping $t\mapsto\sigmathat_t(d)$ is norm continuous, for every $d\in D_{\psi,n}$.
\end{itemize}
Thus, the restriction of $\sigmathat_t$ to $D$ yields a continuous one-parameter automorphism group of $D$.
The algebra $B$ is by definition the C$^*$-algebra generated by $D\cup\pi_1(A)$, the $*$-homomorphism $\theta:A\to B$
is just $\pi_1$ and the conditional expectation $F:B\to D$ is just the restriction of $F_\psi$ to $B$.
We let $\gamma_t$ be the restriction of $\sigmathat_t$ to $B$ and we have that $\gamma=(\gamma_t)_{t\in\Reals}$
is a continuous, one-parameter automorphism group of $B$ satisfying $\gamma_t\circ\theta=\theta\circ\sigma_t$
and $\gamma_t\circ F=F\circ\gamma_t$.

We will now show that $\rho\circ F$ is a $\gamma$-KMS state of $B$.
Even more, letting $(Y,G)=(*_D)_1^\infty(B,F)$ be the amalgamated free product of infinitely many copies of $(B,F)$
and letting $\gammat$ be the one-parameter automorphism group of $Y$ given by $\gammat_t=*_1^\infty\gamma_t$,
we will show that $\rho\circ G$ is a $\gammat$-KMS state of $Y$.
As in~\cite{DKW}, we let $\psih$ denote the vector state on $\Mcal_\psi$ for the vector coming from the identity element of $*_1^\infty A$,
so that $\psih\circ\pi_\psi=\psi$.
Since $\psi$ is a $\sigmat$-KMS state on $*_1^\infty A$, it follows that $\psih$ is a $\sigmathat$-KMS state and, in particular (see Prop. 5.3.7 of~\cite{BR97}, {\em cf}.\/ Prop.~\ref{prop:KMS}\eqref{it:KMSF}), for every $x,y\in\Mcal_\psi$, there is a
continuous function $f_{x,y}:\overline{D_{-1}}\to\Cpx$ that is holomorphic on $D_{-1}$ and such that for all $t\in\Reals$,
\[
f_{x,y}(t)=\psih\big(x\sigmathat_t(y)\big),\qquad f_{x,y}(t-i)=\psih\big(\sigmathat_t(y)x\big).
\]
Thus, the restriction of $\psih$ to any $\sigmathat$-invariant C$^*$-subalgebra of $\Mcal_\psi$ is KMS.
By Thm.~7.3 of~\cite{DKW}, the C$^*$-algebra $Y$ is identified with the subalgebra $\pi_\psi(*_1^\infty A)$ of $\Mcal_\psi$,
where $\rho\circ G$ is identified with the restriction of $\psih$ and where $\gammat$ is identified with the restriction of $\sigmathat$.
It follows that $\rho\circ G$ is $\gammat$-KMS.

By Rem. 7.2(c) of~\cite{DKW}, the GNS representation of $\rho\circ G$ is faithful.
Therefore, by Lemma~\ref{lem:faithful}, $\rho\circ G$ is faithful.
Thus, $\rho\circ F$ is faithful, so both $\rho$ and $F$ are faithful.
Finally, since $\rho\circ G$ is $\gammat$-KMS, we get that $\rho\circ F$ is $\gamma$-KMS.
It follows that all conditions of Defn.~\ref{def:VsigmaA} are satisfied and $(B,D,F,\theta,\gamma,\rho)\in\Vc_\sigma(A)$.
\end{proof}

\section{Centers of certain amalgamated free product von Neumann algebras}
\label{sec:center}

In this section, we return to a topic visited in Sec.~3 of~\cite{DDM14} and we prove results in greater generality.
This problem is to describe the center of a von Neumann algebra realized as the amalgamated free product over
a von Neumann algebra $\Dc$ of infinitely many
copies of $(\Bc,E)$, where $\Bc$ is a von Neumann algebra and $E$ is a normal, faithful conditional expectation onto a unital copy $\Dc\subseteq\Bc$.
The answer, unsurprisingly, is that the center is precisely the intersection of the center of $\Bc$ with $\Dc$;
in~\cite{DDM14}, we proved this under the hypothesis that there exists a tracial state
$\tau_D$ such that $\tau_D\circ E$ is a faithful trace on $\Bc$,
but here we will only assume that $E$ is faithful and that $\Dc$ has a normal, faithful state.

Throughout this section, we let
$\Dc\subseteq\Bc$ be a unital inclusion of
von Neumann algebras with a normal, faithful,
conditional expectation $E:\Bc\to\Dc$.

For an element $x$ of a von Neumann algebra, we will let $[x]$ denote the range projection of $x$.
Thus, $[x]$ is the orthogonal projection onto the closure of the range of $x$, considered as a Hilbert space operator,
and it belongs to the von Neumann algebra generated by $x$.
The notation $Z(\Ac)$ means the center of $\Ac$.

The following is a generalization of Lemma~3.1 of~\cite{DDM14}.
In fact, though that lemma formally assumed the existence of a trace as described above, it only depended
on the faithfulness of the conditional expectation $E$.
Thus, the main (and, actually, quite mild) innovation in the following version is the use of a dense subalgebra $A$, which
will be used in the proof of Thm.~\ref{thm:center} below.

\begin{lemma}\label{lem:suppProj}
Let $E:\Bc\to\Dc$ be a faithful, normal, conditional expectation as described above.
Suppose $A$ is a strong-operator-topology dense $*$-subalgebra of $\Bc$ such that $E(A)\subseteq A$.
Let
\[
q_A=\bigvee\;\{[E(b^*b)]\mid b\in A\cap\ker E\}.
\]
Then $q_A$ is independent of the choice of $A$, and, writing $q$ for this projection, we have
$q\in\Dc\cap Z(\Bc)$ and $(1-q)\Bc=(1-q)\Dc$.
\end{lemma}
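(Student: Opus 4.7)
The strategy is to first establish all three conclusions for the canonical choice $A = \Bc$, giving a projection $q := q_{\Bc}$ with the desired properties, and then to show $q_A = q$ for every admissible $A$. Since each $[E(b^*b)] \in \Dc$ and $\Dc$ is a von Neumann algebra, $q \in \Dc$ automatically. The key observation is that for every $b \in \Bc \cap \ker E$ we have $[E(b^*b)] \leq q$, whence $(1-q)E(b^*b)(1-q) = E((1-q)b^*b(1-q)) = 0$ (using $q \in \Dc$), and faithfulness of $E$ forces $b(1-q) = 0$. Applying this to $b^* \in \ker E$ and taking adjoints gives $(1-q)b = 0$. For arbitrary $x \in \Bc$, the decomposition $x = E(x) + (x - E(x))$ then yields both $x(1-q) = E(x)(1-q)$ and $(1-q)x = (1-q)E(x)$; the second implies $(1-q)\Bc = (1-q)\Dc$ and their difference produces the identity $[q, x] = [q, E(x)]$ for every $x \in \Bc$.

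This last identity reduces centrality of $q$ in $\Bc$ to centrality in $\Dc$. For the latter, take any unitary $u \in \Dc$; since $bu \in \Bc \cap \ker E$ whenever $b \in \Bc \cap \ker E$, one has
\[
u^*[E(b^*b)]u = [E((bu)^*(bu))] \leq q.
\]
Taking the supremum over $b$ gives $u^* q u \leq q$, and replacing $u$ by $u^*$ gives the reverse inequality. Hence $u^* q u = q$ for every unitary of $\Dc$, so $q \in Z(\Dc)$, and then $[q, x] = [q, E(x)] = 0$ for every $x \in \Bc$ puts $q$ in $Z(\Bc)$.

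For independence, $q_A \leq q$ is immediate, and the reverse inequality again reduces to showing $b(1 - q_A) = 0$ for every $b \in \Bc \cap \ker E$. For $b \in A \cap \ker E$ this is the first paragraph's computation applied to $q_A$. For general $b$, Kaplansky density produces a bounded net $(b_\alpha) \subseteq A$ with $b_\alpha \to b$ strongly; the invariance $E(A) \subseteq A$ puts $b_\alpha - E(b_\alpha) \in A \cap \ker E$, so the previous case gives $b_\alpha(1 - q_A) = E(b_\alpha)(1 - q_A)$. The left side converges weakly to $b(1 - q_A)$, while the right converges weakly to $E(b)(1 - q_A) = 0$ by normality of $E$; uniqueness of weak limits forces $b(1 - q_A) = 0$. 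The main subtlety lies precisely in this approximation step, which exploits that $E$ is weakly (rather than necessarily strongly) continuous on bounded sets, and is where the hypothesis $E(A) \subseteq A$ is used.
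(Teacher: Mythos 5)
Your proof is correct, and it reaches the conclusion by the same essential mechanism as the paper's --- Kaplansky density combined with the hypothesis $E(A)\subseteq A$, which lets you replace an arbitrary $b\in\ker E$ by a bounded net of elements of $A\cap\ker E$ --- but the packaging differs in two respects. First, the paper does not reprove the structural conclusions $q\in\Dc\cap Z(\Bc)$ and $(1-q)\Bc=(1-q)\Dc$: it only verifies that $q_A$ coincides with $\bigvee\{[E(b^*b)]\mid b\in\ker E\}$ and then cites the proof of Lemma~3.1 of~\cite{DDM14} for the rest, whereas your first two paragraphs give a clean, self-contained derivation of that cited lemma (the identities $b(1-q)=0=(1-q)b$ for $b\in\ker E$, hence $(1-q)x=(1-q)E(x)$ and $[q,x]=[q,E(x)]$, together with invariance of $q$ under conjugation by unitaries of $\Dc$). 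Second, for the comparison $q\le q_A$ the paper argues at the level of Hilbert-space vectors: a vector in the range of the larger projection is approximated by sums $\sum_j E(b_j^*b_j)\zeta_j$, and each $b_j$ is then replaced by Kaplansky approximants $a_{j,k}-E(a_{j,k})\in A\cap\ker E$. You instead argue at the operator level, obtaining $b_\alpha(1-q_A)=E(b_\alpha)(1-q_A)$ for the approximants and passing to weak limits to get $b(1-q_A)=0$, from which $[E(b^*b)]\le q_A$ follows by running your first computation backwards (this last reduction is left implicit in your write-up and is worth a sentence: from $(1-q_A)b^*b(1-q_A)=0$ apply $E$, using $q_A\in\Dc$, to get $(1-q_A)E(b^*b)(1-q_A)=0$). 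Your route buys a fully self-contained proof that avoids choosing auxiliary vectors and an $\eps$-argument; the paper's is shorter because it outsources the algebraic part. Both of your appeals to normality of $E$ (weak continuity on bounded sets) are legitimate.
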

\begin{proof}
We only need to show
\begin{equation}\label{eq:qE}
q_A=\bigvee\;\{[E(b^*b)]\mid b\in\ker E\},
\end{equation}
because then the proof of Lemma~3.1 of~\cite{DDM14} shows that $q$ has the desired properties.

Let $q(E)$ denote the right-hand-side of~\eqref{eq:qE}.
We clearly have $q_A\le q(E)$.
We may assume that $\Bc$ is represented as a von Neumann subalgebra of bounded operators on a Hilbert space $\HEu$.
Suppose $\xi\in q(E)\HEu$.
Then for every $\eps>0$, there is $n\in\Nats$ and there are $b_1,\ldots,b_n\in\ker E$ and $\zeta_1,\ldots,\zeta_n\in\HEu$ such that
\[
\left\|\xi-\sum_{j=1}^nE(b_j^*b_j)\zeta_j\right\|_2<\eps.
\]
By applying Kaplansky's density theorem to the real and imaginary parts of $b_j$ individually,
we find that, for each $j$, there is a bounded sequence $(a_{j,k})_{k=1}^\infty$ in $A$ that converges in strong operator topology
to $b_j$ and such that $a_{j,k}^*$ converges in strong operator topology to $b_j^*$.
Thus, $E(a_{j,k}^*a_{j,k})$ converges in strong operator topology to $E(b_j^*b_j)$.
Since $E$ is continuous with respect to strong operator topology and $E(b_j)=0$, we may without loss of generality assume $E(a_{j,k})=0$
for each $j$ and $k$.
This implies that $\xi$ has distance no more than $\eps$ to an element of $q_A\HEu$.
This shows $q(E)\le q_A$, as required.
\end{proof}

\begin{thm}\label{thm:center}
As described above,
let $E:\Bc\to\Dc$ be a faithful normal conditional expectation of von Neumann algebras.
Let
\begin{equation}\label{eq:MF1}
(\Mcal,F)=(*_\Dc)_1^\infty(\Bc,E)
\end{equation}
be the von Neumann algebra free product with amalgamation over $\Dc$ of infinitely many copies of $(\Bc,E)$.
Then the center of $\Mcal$ is contained in $\Dc$ and is, in particular $Z(\Bc)\cap\Dc$.
\end{thm}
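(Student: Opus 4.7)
My plan is to follow the general strategy of Theorem~3.3 of \cite{DDM14}, adapting it to the non-tracial setting. The first reduction is to the case $q=1$ via Lemma~\ref{lem:suppProj}: applied with $A=\Bc$, the lemma produces a projection $q\in\Dc\cap Z(\Bc)$ satisfying $(1-q)\Bc=(1-q)\Dc$. Since $1-q$ lies in $\Dc$ (hence in every copy $\Bc_i$) and commutes with $\Bc$, it is central in each $\Bc_i$ and therefore in $\Mcal$; moreover $(1-q)\Mcal$ is the amalgamated free product of copies of $(1-q)\Bc=(1-q)\Dc$ over $(1-q)\Dc$, which collapses to $(1-q)\Dc\subseteq\Dc$. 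Cutting down by $q$, one may therefore assume $q=1$.

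Using the normal faithful state $\omega_\Dc$ on $\Dc$, form the normal faithful state $\omega=\omega_\Dc\circ F$ on $\Mcal$ and pass to the GNS representation $H=L^2(\Mcal,\omega)$ with cyclic and separating vector $\Omega$. The reduced amalgamated free product structure provides the orthogonal decomposition
\begin{equation*}
H = L^2(\Dc)\oplus\bigoplus_{n\ge1}\bigoplus_{w}H_w^0,
\end{equation*}
where $w=(i_1,\ldots,i_n)$ ranges over reduced words (so $i_k\neq i_{k+1}$), $H_i^0=L^2(\Bc_i)\ominus L^2(\Dc)$, and $H_w^0=H_{i_1}^0\otimes_\Dc\cdots\otimes_\Dc H_{i_n}^0$ is the Connes-fusion tensor product. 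For $z\in Z(\Mcal)$ decompose $\hat z = \widehat{F(z)}+\sum_{w\ne\emptyset} P_w\hat z$; since $\Omega$ is separating, the conclusion $z\in\Dc$ reduces to showing $P_w\hat z=0$ for every non-empty $w$.

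To prove this, fix a non-empty $w_0=(i_1,\ldots,i_n)$ and, using that the index set is infinite, pick an index $j$ avoiding $\{i_1,\ldots,i_n\}$. For $b\in\Bc_j\cap\ker E$, the commutation $zb=bz$ becomes $L_b\hat z = L_z\hat b$ in $H$. Projecting onto $H_{(j,w_0)}^0$ and unwinding the left action of $\Bc_j$ on the amalgamated free product decomposition, the contribution from $P_{w_0}\hat z$ is the pure ``prepending'' term $\hat b\otimes_\Dc P_{w_0}\hat z$, while the only other $P_w\hat z$ that contribute do so via ``off-diagonal'' interactions arising when $w$ begins or ends with $j$. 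Because the reduction $q=1$ makes $\{\hat b : b\in\Bc_j\cap\ker E\}$ total in $H_j^0$, and because the infinite index set lets one bring in further fresh auxiliary indices, the resulting system of equations forces $P_{w_0}\hat z=0$, arguing by induction on the length of $w_0$.

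The main obstacle is controlling the off-diagonal contributions in the last step. In the tracial setting of \cite{DDM14}, a clean cyclic identity quickly eliminates them; here one must instead exploit the fact, automatic from centrality, that $z$ lies in the centralizer of $\omega$ (so $\omega(xz)=\omega(zx)$ for all $x\in\Mcal$), together with careful bookkeeping of the Connes-fusion decomposition and the flexibility provided by the infinite index set. Once $Z(\Mcal)\subseteq\Dc$ has been established, the equality $Z(\Mcal)=Z(\Bc)\cap\Dc$ follows immediately: a central element in $\Dc$ commutes with every $\Bc_i$ and so lies in $Z(\Bc)$, while conversely every element of $Z(\Bc)\cap\Dc$ commutes with each generator of $\Mcal$.
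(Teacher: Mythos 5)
Your overall architecture --- the Hilbert-bimodule decomposition of $L^2(\Mcal,\omega)$, commutation of a central element against elements $b\in\Bc_j\cap\ker E$ from a fresh copy, and the support projection from Lemma~\ref{lem:suppProj} --- matches the paper's, but there are two genuine gaps. The first, and more serious, is that you never confront the feature that distinguishes this theorem from its tracial predecessor: when $\omega=\omega_{\Dc}\circ F$ is not a trace, the right multiplication operator $r(b):\yh\mapsto(yb)\hat{\;}$ on $L^2(\Mcal,\omega)$ is in general unbounded, so the relation $zb=bz$ cannot simply be ``projected onto $H_{(j,w_0)}^0$ and unwound'' for arbitrary $b\in\Bc_j\cap\ker E$. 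The paper deals with this by restricting to $b$ entire analytic for the modular automorphism group of $\omega$, for which Tomita--Takesaki theory gives $r(b)=J_\phi\lambda(\sigma_{-i/2}(b^*))J_\phi$ bounded; this is precisely why Lemma~\ref{lem:suppProj} is formulated for a dense $*$-subalgebra $A$ (taken there to be the analytic elements), so that the projection $q$ built from analytic $b\in\ker E$ agrees with the one built from all of $\ker E$. Relatedly, your reduction to $q=1$ buys less than you claim: totality of $\{\hat b:b\in\Bc_j\cap\ker E\}$ in $H_j^0$ holds regardless of $q$; what $q=1$ actually controls is the joint right support of the elements $E(b^*b)$, which is a different statement.

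The second gap is that the decisive step is asserted rather than proved: you yourself flag ``controlling the off-diagonal contributions'' as the main obstacle and offer only ``careful bookkeeping'' plus the observation that $z$ lies in the centralizer of $\omega$ --- which is trivially true for central $z$ and does not by itself control anything. The paper avoids a word-by-word induction entirely. It sets $\eta=\xh-F(x)\hat{\;}$, works with the truncations $\eta_{N-1}$ (the component of $\eta$ supported on words in the letters $1,\dots,N-1$), pairs them against an analytic $b_N$ in the fresh copy $\Bc_N$, and uses that $\lambda(b_N)\HEu^{(N-1)}$, $r(b_N)\HEu^{(N-1)}$ and $(\lambda(b_N)-r(b_N))L^2(\Dc,\phi)$ are mutually orthogonal, together with the permutation symmetry of $\Mcal$ (which makes $\|\lambda(b_N)-r(b_N)\|$ independent of $N$), to conclude $\|\lambda(b_N)\eta_{N-1}\|_2^2\le\|\lambda(b_1)-r(b_1)\|^2\,\|\eta-\eta_{N-1}\|_2^2\to0$, hence $\lambda(E(b^*b))\eta=0$ for all analytic $b\in\ker E$; Lemma~\ref{lem:suppProj} then gives $q\eta=0$, and $(1-q)\Mcal=(1-q)\Dc$ finishes. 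To salvage your plan you would need to (i) restrict throughout to analytic $b$ and justify boundedness of the right actions, and (ii) replace the inductive elimination of off-diagonal terms by an actual argument --- at which point you would essentially have reconstructed the paper's proof.
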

\begin{proof}
Let $x\in Z(\Mcal)$ and let us show $F(x)=x$.
We may without loss of generality assume that $\Bc$ is countably generated.
Indeed, the $*$-subalgebra of $\Mcal$ spanned by the words in the various copies of $\Bc$ is strong-operator-topology
dense in $\Mcal$, so $x$ is the limit (in strong-operator-topology) of a sequence, each term of which is a finite sum of such words.
Thus, we may replace $\Bc$ by some countably generated subalgebra of it, and it will suffice to show $F(x)=x$ under these hypothesis.
Note that, then, $\Dc$ is also countably generated.

It was proved by E.\ Ricard that $F$ is faithful.
(See Thm. 2.1~\cite{I11}, where this proof appears in print.)
Since $\Dc$ is countably generated, it has a faithful, normal state $\phi$.
Then $\phi\circ F$ is a normal, faithful state of $\Mcal$.
We will let $\Bc_i$ denote the $i$-th copy of $\Bc$ in $\Mcal$, arising from the 
amalgamated free product construction and $E_i:\Bc_i\to\Dc$ the corresponding copy of the conditional expectation $E$.
We will let $\HEu_i$ denote the Hilbert $\Dc$-bimodule $L^2(\Bc_i,E_i)$, which contains an orthocomplemented copy of $\Dc$,
and we will let $\HEu_i\oup$ denote the orthocomplement of $\Dc$ in $\HEu_i$.
Recall that $\Mcal$ is constructed as follows:
on the Hilbert $\Dc$-bimodule
\[
\HEu=\Dc\oplus\bigoplus_{\substack{k\ge1 \\ i_1,\ldots,i_k\ge1 \\ i_j\ne i_{j+1}}}
\HEu_{i_1}\oup\otimes_\Dc\cdots\otimes_\Dc\HEu_{i_k}\oup,
\]
we have the usual left actions of $\Bc_i$, $i\in \Nats$. 
Letting $\pi_\Dc$ denote the GNS representation of $\Dc$ on $L^2(\Dc,\phi)$, 
the left actions of $\Bc_i$ on $\HEu$ induce left actions on the Hilbert space
\[
\HEu\otimes_{\pi_\Dc}L^2(\Dc,\phi)
=L^2(\Dc,\phi)\oplus\bigoplus_{\substack{k\ge1 \\ i_1,\ldots,i_k\ge1 \\ i_j\ne i_{j+1}}}
\HEu_{i_1}\oup\otimes_\Dc\cdots\otimes_\Dc\HEu_{i_k}\oup\otimes_{\pi_D}L^2(\Dc,\phi),
\]
which is in a natural way identified with $L^2(\Mcal,\phi\circ F)$.

We will review some elements of Tomita--Takesaki theory that we need; see, for example~\cite{T03}.
For $x\in\Mcal$, we will write $\xh$ for the corresponding element of $L^2(\Mcal,\phi\circ F)$.
We let $\lambda$ denote the usual left action of $\Mcal$ on $L^2(\Mcal,\phi\circ F)$.
Thus, $\lambda(x)\yh=(xy)\hat{\:}$, $y\in M$.
Consider the modular automorphism group $\sigma=(\sigma_t)_{t\in\Reals}$ of $\Mcal$ associated to the faithful state $\phi\circ F$.
Note that $\sigma$ leave $\Dc$ and each $\Bc_i$ globally invariant.
Let $S_\phi$ denote the densely defined, closed, conjugate linear operator defined by $S_\phi(\xh)=(x^*)\hat{\;}$
and, as usual, write $S_\phi=J_\phi\Delta_\phi^{1/2}$ for its polar decomposition.
Then $J_\phi$ is a conjugate linear, surjective isometry.
For $x\in\Mcal$, let $r(x)$ denote the densely defined ``right multiplication'' operator $r(x)\yh=(yx)\hat{\;}$, $y\in M$.
It need not be bounded.
However, (see, for example equations (4)-(6) of~\cite{F00}), if $x\in \Mcal$ is entire analytic with respect to $\sigma$, then 
$r(x)=J_\phi\lambda(\sigma_{-i/2}(x^*))J_\phi$ and $r(x)$ is bounded.
Note, also, that if $x\in M$ is entire analytic, then so is $F(x)$.

Let $x\in Z(\Mcal)$ and let $\eta=\xh-F(x)\hat{\;}$.
We will show that $\eta=0$, and this will prove the theorem.
Let $\eta_N$ denote the orthogonal projection of $\eta$ onto 
the subspace
\[
\HEu^{(N)}=\bigoplus_{\substack{k\ge1 \\ 1\le i_1,\ldots,i_k\le N \\ i_j\ne i_{j+1}}}
\HEu_{i_1}\oup\otimes_\Dc\cdots\otimes_\Dc\HEu_{i_k}\oup\otimes_{\pi_D}L^2(\Dc,\phi),
\]
of $L^2(\Mcal,\phi\circ F)$.
Then $\lim_{N\to\infty}\|\eta-\eta_N\|=0$.
Suppose $b_1\in\Bc_1\cap\ker E_1$ is entire analytic with respect to the restriction of $\sigma$ to $\Bc_1$.
Let $b_N$ be the copy of $b_1$ in $\Bc_N$.
Since $x$ is central, we have
\begin{equation}\label{eq:bxxb}
0=(b_Nx-xb_N)\hat{\;}=(\lambda(b_N)-r(b_N))\xh=(\lambda(b_N)-r(b_N))(\eta_{N-1}+F(x)\hat{\;}+\eta-\eta_{N-1}).
\end{equation}
By considering the dense set of elements in $\HEu^{(N-1)}$ that are of the form $\yh$ for $y\in\Mcal$, we see that the three subspaces
\[
\lambda(b_N)\HEu^{(N-1)},\qquad r(b_N)\HEu^{(N-1)}\quad\text{and}\quad\lambda(b_N)L^2(\Dc,\phi)+r(b_N)L^2(\Dc,\phi)
\]
are mutually orthogonal.
Therefore, from~\eqref{eq:bxxb}, we get
\begin{multline}\label{eq:sumeta}
\|\lambda(b_N)\eta_{N-1}\|_2^2+\|r(b_N)\eta_{N-1}\|_2^2+\|(\lambda(b_N)-r(b_N))F(x)\hat{\;}\|_2^2 \\
=\|(\lambda(b_N)-r(b_N))(\eta-\eta_{N-1})\|_2^2\le\|\lambda(b_N)-r(b_N)\|^2\|\eta-\eta_{N-1}\|_2^2.
\end{multline}
For every permutation $\rho$ of $\Nats$, there is an $F$-preserving automorphism of $\Mcal$ that sends the copy $\Bc_i$
identically to the copy $\Bc_{\rho(i)}$.
Thus, we see that $\|\lambda(b_N)-r(b_N)\|$ does not depend on $N$.
So the right-hand-side of~\eqref{eq:sumeta} tends to $0$ as $N\to\infty$, and we get
$\lim_{N\to\infty}\|\lambda(b_N)\eta_{N-1}\|_2=0$.
Letting $d=E_1(b_1^*b_1)^{1/2}$, we have $d=E_N(b_N^*b_N)^{1/2}$ for every $N$.
Moreover, again by 
considering the dense set of elements in $\HEu^{(N-1)}$ that are of the form $\yh$ for $y\in\Mcal$, 
we have
$\|\lambda(b_N)v\|_2=\|\lambda(d)v\|_2$ for every $v\in\HEu^{(N-1)}$.
So we get
\[
\|\lambda(d)\eta\|_2=\lim_{N\to\infty}\|\lambda(d)\eta_{N-1}\|_2=\lim_{N\to\infty}\|\lambda(b_N)\eta_{N-1}\|_2=0.
\]
Since $E_N(b_N^*b_N)=d^2$ for all $N$, we have
\begin{equation}\label{eq:Ebeta0}
\lambda(E_N(b_N^*b_N))\eta=0.
\end{equation}

Let $A\subseteq\Bc$ be the $*$-subalgebra of elements that are entire analytic with respect to the
restriction to $\Bc_1$ of the modular automorphism group $\sigma$ (where we have identified $\Bc$ and $\Bc_1$).
Then $A$ is strong-operator-topology dense in $\Bc$
(see, for example, Prop. 2.5.22 of~\cite{BR87} or Lemma 2.3 of~\cite{T03}).
Let
\[
q=\bigvee\;\{[E(b^*b)]\mid b\in A\cap\ker E\}.
\]
By Lemma~\ref{lem:suppProj}, 
$q\in Z(\Bc)\cap\Dc$ and $(1-q)\Bc=(1-q)\Dc$.
All copies of $q$ in the various $\Bc_N$ are, therefore identified with each other, and $q$ lies in the center of $\Mcal$.
Thus, $(1-q)\Bc_N=(1-q)\Dc$ for all $N$, and we have $(1-q)\Mcal=(1-q)\Dc$.
From~\eqref{eq:Ebeta0}, we conclude that $q\eta=0$.
Since $\phi\circ F$ is faithful, this yields $q(x-F(x))=0$.
So $x-F(x)=(1-q)(x-F(x))\in(1-q)\Dc$.
However, $x-F(x)\perp\Dc$, so we have $x-F(x)=0$, as required.
\end{proof}

\section{The simplex of KMS quantum symmetric states}
\label{sec:simplex}

Turning back to the notation described in Sec.~\ref{sec:QSKMSS}, we let $K(*_1^\infty A,*_1^\infty\sigma)$
denote the set of all $*_1^\infty\sigma$-KMS states on the universal unital free product C$^*$-algebra $*_1^\infty A$.
This set $K(*_1^\infty A,*_1^\infty\sigma)$ is a compact affine set in the weak$^*$-topology and is known to be a Choquet simplex whose extreme points are those $\phi\in K(*_1^\infty A,*_1^\infty\sigma)$, the von Neumann algebras
generated by images of whose GNS-representations are factors
({\em cf}.\/ \cite{EKV70}, \cite{R84} and~\cite{TW73}).

Recall that we defined $\QSS_\sigma(A)=K(*_1^\infty A,*_1^\infty\sigma)\cap\QSS(A)$.

\begin{remark}
We easily see that $\QSS_\sigma(A)$ is nonempty if and only if $A$ possesses a $\sigma$-KMS state.
Indeed, if $\phi$ is a $\sigma$-KMS state on $A$, then we always have at least two elements of $\Vc_\sigma(A)$,
which are distinct so long as $A\ne\Cpx$:
\begin{enumerate}[(i)]
\item taking $(B,D,F,\theta,\gamma,\rho)=(A,\Cpx,\phi,\id_A,\sigma,\id_\Cpx)$, we get the free product state
\[
\Psi_\sigma(B,D,F,\theta,\gamma,\rho)=*_1^\infty\phi\in\QSS_\sigma(A);
\]
\item taking $(B,D,F,\theta,\gamma,\rho)=(A,A,\id_A,\id_A,\sigma,\phi)$, we get the state
\[
\Psi_\sigma(B,D,F,\theta,\gamma,\rho)=\phi\circ\big(*_1^\infty \id_A\big)\in\QSS_\sigma(A)
\]
that arises from the $*$-homomorphism $*_1^\infty A\to A$
sending each copy of $A$ identically to itself, followed by the state $\phi$.
\end{enumerate}
The paper~\cite{Wo85} addresses and answers the question of existence of a $\sigma$-KMS state of $A$.
\end{remark}

The following theorem follows easily, just like Thm.~5.3 of~\cite{DDM14},
since we have already proved the main ingredients.

\begin{thm}\label{thm:simplex}
If $\QSS_\sigma(A)$ is nonempty, then it is a Choquet simplex and a face of $K(*_1^\infty A,*_1^\infty\sigma)$.
Moreover, given $\psi\in\QSS_\sigma(A)$,
let $(B,D,F,\theta,\gamma,\rho)\in\Vc_\sigma(A)$ be the element that maps to $\psi$
under the bijection $\Psi_\sigma$ from Thm.~\ref{thm:Psisig}.
Let $\Bc$ be the von Neumann algebra generated by the image of $B$
under the GNS representation $\pi_{\rho\circ F}$ of the state $\rho\circ F$ and let
$\Dc$ be the von Neumann subalgebra generated by $\pi_{\rho\circ F}(D)$.
Then the following are equivalent:
\begin{enumerate}[(i)]
\item\label{it:extrQSS} $\psi$ is an extreme point of $\QSS_\sigma(A)$,
\item\label{it:extrKMS} $\psi$ is an extreme point of $K(*_1^\infty A,*_1^\infty\sigma)$,
\item\label{it:center} $Z(\Bc)\cap\Dc=\Cpx1$, where $Z(\Bc)$ is the center of $\Bc$.
\end{enumerate}
\end{thm}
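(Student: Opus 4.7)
The plan is to combine the two structural results already proved in this paper---the bijection $\Psi_\sigma$ of Thm.~\ref{thm:Psisig} and the center computation of Thm.~\ref{thm:center}---with the standard structure theory of the KMS simplex, following the scheme of Thm.~5.3 of~\cite{DDM14}.

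First I would prove the equivalence (ii)$\iff$(iii). By standard theory (Thm.~5.3.30 of~\cite{BR97} and the references cited in the statement of Thm.~\ref{thm:simplex}), $\psi$ is extreme in $K(*_1^\infty A,*_1^\infty\sigma)$ if and only if $\Mcal_\psi=\pi_\psi(*_1^\infty A)''$ is a factor. The construction in the proof of Thm.~\ref{thm:Psisig} identifies $\pi_\psi(*_1^\infty A)$ with the reduced amalgamated free product C$^*$-algebra $Y=(*_D)_1^\infty(B,F)$, so passing to weak closures identifies $\Mcal_\psi$ with the von-Neumann-algebraic amalgamated free product $(*_\Dc)_1^\infty(\Bc,\hat F)$, where $\Bc=\pi_{\rho\circ F}(B)''$, $\Dc=\pi_{\rho\circ F}(D)''$, and $\hat F$ is the normal extension of $F$. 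The faithfulness hypotheses built into Defn.~\ref{def:VsigmaA}, together with Lemma~\ref{lem:faithful} applied to the KMS state $\rho\circ G$, guarantee that $\hat F$ is faithful and that $\Dc$ carries a normal faithful state. Thm.~\ref{thm:center} then delivers $Z(\Mcal_\psi)=Z(\Bc)\cap\Dc$, so $\Mcal_\psi$ is a factor precisely when $Z(\Bc)\cap\Dc=\Cpx1$.

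Next I would show that $\QSS_\sigma(A)$ is a face of $K(*_1^\infty A,*_1^\infty\sigma)$; this will simultaneously yield the Choquet-simplex conclusion (faces of simplices are simplices) and the equivalence (i)$\iff$(ii) (extreme points of a face are exactly the extreme points of the ambient simplex that lie in it). Given $\psi\in\QSS_\sigma(A)$ and a decomposition $\psi=t\psi_1+(1-t)\psi_2$ with $\psi_j$ KMS and $t\in(0,1)$, the standard structure of the KMS simplex writes $\psi_j(x)=\hat\psi(h_j\,\pi_\psi(x))$ for positive $h_j\in Z(\Mcal_\psi)$ with $\hat\psi(h_j)=1$. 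By the identification above and Thm.~\ref{thm:center}, $h_j\in Z(\Bc)\cap\Dc$. Since, by the KMS condition, $\sigmathat$ is the modular group of $\hat\psi$, it fixes the centre of $\Mcal_\psi$ pointwise, so $h_j$ and hence its support projection $p_j$ are $\sigmathat$-invariant. Cutting by $p_j$ produces a sextuple $(p_j\Bc p_j,p_j\Dc p_j,\hat F\restrict_{p_j\Bc p_j},\theta_j,\gamma_j,\rho_j)$, with $\rho_j(\,\cdot\,)=\hat\rho(h_j\,\cdot\,)$; one then verifies that $\Psi_\sigma$ applied to this sextuple returns $\psi_j$, placing $\psi_j$ in $\QSS_\sigma(A)$.

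The main technical obstacle will be verifying that the cut-down sextuple really meets all nine defining conditions of $\Vc_\sigma(A)$, especially the minimality clause~\eqref{it:Dsmallest} and the KMS clause~\eqref{it:gammaKMS}; these verifications are routine but require care, and are directly parallel to the analogous step in~\cite{DDM14}, Thm.~5.3. Compatibility with the automorphism group is automatic because $p_j$ is $\sigmathat$-invariant, central, and lies in $\Dc$. Once this is done, the face property holds, $\QSS_\sigma(A)$ inherits the Choquet-simplex structure from $K(*_1^\infty A,*_1^\infty\sigma)$, and the equivalences (i)$\iff$(ii)$\iff$(iii) are in hand.
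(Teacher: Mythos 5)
Your proposal is correct in substance and rests on the same two pillars as the paper's proof: the identification of $\Mcal_\psi=\pi_\psi(*_1^\infty A)''$ with the von Neumann amalgamated free product $(*_\Dc)_1^\infty(\Bc,\Fc)$ so that Thm.~\ref{thm:center} computes $Z(\Mcal_\psi)=Z(\Bc)\cap\Dc$, and the standard fact that extreme points of the KMS simplex are exactly the states with factorial GNS von Neumann algebra. Where you genuinely diverge is in the organization of (i)$\iff$(ii) and the face property. The paper proves the cycle (i)$\implies$(iii)$\implies$(ii)$\implies$(i), doing (i)$\implies$(iii) by contraposition: given a nontrivial projection $p\in Z(\Bc)\cap\Dc$ (hence central in $\Mcal_\psi$), it writes down the explicit decomposition $\psi=t\psi_0+(1-t)\psi_1$ with $\psi_i$ obtained by cutting $\rhohat$ by $p$ and $1-p$ and composing with the canonical expectation onto $\Dc$; quantum symmetry of the $\psi_i$ is then immediate from Thm.~4.1 of~\cite{DDM14}, KMS-ness from centrality of $p$, and the face property is deduced abstractly afterwards from the containment of extreme points. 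You instead attack the face property head-on via the central Radon--Nikodym densities $h_j$ of an arbitrary KMS decomposition, and then propose to realize each component by cutting the sextuple down by the support projections of the $h_j$. That route works, but it carries exactly the burden you flag as the ``main technical obstacle'': re-verifying all the axioms of $\Vc_\sigma(A)$ (notably the minimality clause~\eqref{it:Dsmallest}) for the cut-down sextuple and checking that $\Psi_\sigma$ returns $\psi_j$. This is avoidable: to place $\psi_j$ in $\QSS_\sigma(A)$ you do not need to exhibit its preimage under $\Psi_\sigma$. Since $h_j\in Z(\Bc)\cap\Dc$ and the canonical expectation of $\Mcal_\psi$ onto $\Dc$ is a $\Dc$-bimodule map, $\psi_j$ is of the form (normal state on $\Dc$) composed with that expectation and $\pi_\psi$, so Thm.~4.1 of~\cite{DDM14} gives $\psi_j\in\QSS(A)$ directly, and KMS-ness is already assumed. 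With that shortcut your argument closes cleanly and amounts to a reorganization of the paper's proof; as written, it is a somewhat heavier but still viable alternative.
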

\begin{proof}
The implication \eqref{it:extrQSS}$\implies$\eqref{it:extrKMS}, when proved, will imply that $\QSS_\sigma(A)$ is a face of 
$K(*_1^\infty A,*_1^\infty\sigma)$ and, thus, that it is a Choquet simplex.

The implication  \eqref{it:extrKMS}$\implies$\eqref{it:extrQSS} is clearly true.

We now show the implication \eqref{it:center}$\implies$\eqref{it:extrKMS}.
By construction, the state $\rho\circ F$ of $B$ is $\gamma$-KMS and faithful.
By Lemma~\ref{lem:faithful}, this implies that the normal extension $(\rho\circ F)\hat{\;}$ of $\rho\circ F$ to $\Bc$ is faithful.
Moreover, the conditional expectation $F$ induces a projection from $L^2(B,\rho\circ F)$ onto the subspace $L^2(D,\rho)$
and compression by this projection induces a $(\rho\circ F)\hat{\;}$-preserving, normal, conditional expectation $\Fc:\Bc\to\Dc$, that extends $F$. Letting $\rhohat$ denote the unique normal extension of $\rho$ to $\Dc$, we have $(\rho\circ F)\hat{\;}=\rhohat\circ\Fc$.
The image $\pi_\psi(*_1^\infty A)$ of the GNS representation of $\psi$ is, by the construction in Thm.~7.3 of~\cite{DKW},
isomorphic to the reduced amalgamated free product C$^*$-algebra
\[
(Y,G)=(*_D)_{i=1}^\infty(B,F),
\]
of infinitely many copies of $(B,F)$, with amalgamation over $D$.
Moreover, the GNS representation is itself the defining representation of the reduced free product C$^*$-algebra
on the Hilbert space $\HEu=L^2(Y,G)\otimes_{\pi_\rho}L^2(D,\rho)$,
where $L^2(Y,G)$ is the free product of infinitely many copies of the Hilbert $D$-bimodule $L^2(B,F)$.

The von Neumann algebra $\Mcal_\psi:=\pi_\psi(*_1^\infty A)''$ generated by the image of the GNS representation of $\psi$
is, therefore, isomorphic to the amalgamated free product of von Neumann algebras,
\[
(*_\Dc)_{i=1}^\infty(\Bc,\Fc),
\]
of infinitely many copies of $(\Bc,\Fc)$, with amalgamation over $\Dc$.
By Thm.~\ref{thm:center} and the hypothesis~\eqref{it:center}, it follows that $\Mcal_\psi$ is a factor.
Therefore, $\psi$ is an extreme point of $K(*_1^\infty A,*_1^\infty\sigma)$.

We will now show \eqref{it:extrQSS}$\implies$\eqref{it:center}, which will finish the proof.
Suppose that~\eqref{it:center} fails to hold.
Then there is a projection $p\in Z(\Bc)\cap\Dc$ that is neither $0$, nor $1$.
Thus, $t:=\rhohat(p)\in(0,1)$.
Note that $p$ lies in the center of $\Mcal_\psi$ (Thm.~\ref{thm:center}).
Consider the states $\rho_0$ and $\rho_1$ of $D$, defined, for $d\in D$, by
\[
\rho_0(d)=t^{-1}\rhohat(\pi_\rho(d)p),\qquad\rho_1(d)=(1-t)^{-1}\rhohat(\pi_\rho(d)p).
\]
We see that $\rho_0\ne\rho_1$, by considering a bounded sequence $(x_n)_{n=1}^\infty$ in $D$ such
that $\pi_\rho(x_n)$ converges in strong operator topology to $p\in\Dc$.
Consider, for each $i\in\{0,1\}$,  the state $\psi_i=\rho_i\circ\Fc\circ\pi_\psi$ of $*_1^\infty A$.
By Thm.~4.1 of~\cite{DDM14}, each is a quantum symmetric state of $A$.
Using that $p$ lies in the center of $\Mcal_\psi$, we have that each $\psi_i$ is also $(*_1^\infty\sigma)$-KMS.
Thus, $\psi_i\in\QSS_\sigma(A)$.
But we have $\psi=t\psi_0+(1-t)\psi_1$, and $\psi_0\ne\psi_1$.
Therefore,~\eqref{it:extrQSS} fails to hold.
This finishes the proof.
\end{proof}

\end{document}